
\documentclass{amsart}
\usepackage{amsmath, amsthm, amssymb}
\usepackage{verbatim}
\usepackage{tikz}

\renewcommand{\d}{\partial}

\newcommand{\veps}{\varepsilon}
\newcommand{\vphi}{\varphi}
\newcommand{\al}{\alpha}
\newcommand{\be}{\beta}

\newcommand{\de}{\delta}

\newcommand{\Om}{\Omega}
\newcommand{\De}{\Delta}

\newcommand{\cE}{\mathcal{E}}

\newcommand{\bC}{\mathbb{C}}

\newtheorem{thm}{Theorem}
\newtheorem{prop}[thm]{Proposition}
\newtheorem{lem}[thm]{Lemma}

\theoremstyle{definition}

\newtheorem{remark}[thm]{Remark}

\newtheorem{expl}[thm]{Example}

\numberwithin{thm}{section}
\numberwithin{equation}{section}

\renewcommand{\[}{\begin{equation}}
\renewcommand{\]}{\end{equation}}

\newcommand{\wed}{\wedge}

\newcommand{\wtd}{\widetilde}

\title[H\"older regularity of solutions to complex Hessian equations]{A remark on the H\"older regularity of solutions to the complex Hessian equation}

\author{S\l awomir Ko\l odziej and Ngoc Cuong Nguyen} 
\address{Faculty of Mathematics and Computer Science, Jagiellonian University, \L ojasiewicza 6, 30-348 Krak\'ow, Poland}
\email{slawomir.kolodziej@im.uj.edu.pl}
\address{Department of Mathematical Sciences, KAIST, 291 Daehak-ro, Yuseong-gu, Daejeon 34141, South Korea}
\email{cuongnn@kaist.ac.kr}



\begin{document} 

\begin{abstract} 
We prove that the Dirichlet problem for the complex Hessian equation has the H\"older continuous solution provided it has a subsolution with this property.
Compared to the previous result of Benali-Zeriahi and Charabati-Zeriahi
we remove the assumption on the finite total mass of the measure  on the right hand side.                 
\end{abstract}

\maketitle

\section{Introduction}

The theory of weak solutions  of the complex Monge-Amp\`ere equations started with the fundamental works of Bedford and Taylor \cite{BT76, BT82}. 
 It has found numerous profound applications in complex geometry (see e.g. the survey \cite{PS12}).
The H\"older continuous  solutions  are  interesting because such functions often appear in complex dynamic in higher dimension as potentials of  Green currents associated to  dominating holomorphic maps  \cite{DNS} and \cite{Si97}, as well as in approximation theory \cite{PP86}. 

Thanks to a series of works   \cite{BKPZ16}, \cite{BT76}, \cite{Cha15a, Cha15b}, \cite{GKZ08},  \cite{Ng18, Ng20}, \cite{hiep} and \cite{viet16} the H\"older regularity of the solution to the complex Monge-Amp\`ere equation  is rather well-understood.
 Notably, it was shown in \cite{BKPZ16} that if the measure on the right hand side has a density in $L^p$, $p>1$ and the boundary data  is H\"older continuous, then the solution is H\"older continuous. Such a regularity was also proved for the volume form of a generic smooth compact submanifold in $\bC^n$ in \cite{hiep} and \cite{viet16}. These measures are singular with respect to the Lebesgue measure.  A necessary and sufficient condition for the existence of a H\"older continuous solution via H\"older continuous subsolution was proved in \cite{Ng18, Ng20}, which was inspired by \cite{ko95}.

Recently, potential theory has been developed very successfully for the study of weak solutions to complex Hessian equations
(of which the  Monge-Amp\`ere equation is a special case)  both in domains in $\bC^n$ and on  compact K\"ahler manifolds by B\l ocki \cite{Bl05},  Dinew and the first author \cite{DK14, DK17}, and Lu \cite{Lu12, Lu13, Lu15}. 
The study of H\"older regularity of $m$-subharmonic solutions to the complex Hessian equation 
is recently  very active  \cite{BZ20},  \cite{Cha14},  \cite{ChZ20}, \cite{EZ}, \cite{KN20}, \cite{Li04}, \cite{Ng13} and \cite{V88}. Although the statements are often similar to the ones for the complex Monge-Amp\`ere equation,  the proofs are substantially different and usually the H\"older exponent of the solutions is
probably far from optimal.

We focus now on the H\"older regularity of $m$-subharmonic solutions to the complex Hessian equation assuming the existence of a H\"older continuous subsolution.
Let $\Om$ be a bounded domain in $\bC^n$. Let $1\leq m<n$ be an integer.  Let us denote by $\be = dd^c |z|^2$ the standard K\"ahler form and $SH_m(\Om)$ the set of all $m$-subharmonic functions in $\Om$ defined in \cite{Bl05}. For  $v \in SH_m(\Om)\cap L^\infty(\Om)$ its complex  Hessian measure is  given by
$$
	H_m(v) := (dd^cv)^m \wed \be^{n-m}.
$$
Let $\vphi \in SH_m(\Om) \cap C^{0,\al}(\bar\Om)$ with $0<\al \leq 1$ and let $\mu$ be a positive measure in $\Om$ satisfying
\[ \label{eq:subsolution} \mu \leq H_m(\vphi).
\]
To study the Dirichlet problem we assume that  $\Om$ is a bounded strictly  $m$- pseudoconvex domain in $\bC^n$. Let $\rho \in SH_m(\Om) \cap C^2(\bar\Om)$ be a strictly $m$-subharmonic defining function of $\Om$.  
By adding  $\vphi$ to an $m$-subharmonic envelope whose boundary value is equal to $-\vphi$ as in \cite[Theorem~4.7]{Ng13},  we obtain a new H\"older continuous subsolution 
(with possibly smaller H\"older exponent) 
with zero boundary value. Therefore, without loss of generality we may assume that
\[\label{eq:boundary-zero}
	\vphi =0 \quad \text{on }\d\Om.
\]
Given  the boundary data  $\phi \in C^{0,\al}(\d\Om)$ we wish to solve the Hessian equation
\[\label{eq:heq}
	u \in SH_m(\Om) \cap C^{0}(\bar\Om), \quad H_m(u) = \mu, \quad \text{and } u = \phi \quad\text{on }\d\Om.
\]
The main result of the paper is as follows.

\begin{thm}\label{thm:main} Let $\mu$ be the measure satisfying \eqref{eq:subsolution}. Then, the Dirichlet problem \eqref{eq:heq} is solvable and there exists $0<\al' \leq 1$ such that
\[\label{eq:holder-s} u \in C^{0,\al'} (\bar \Om).
\]
\end{thm}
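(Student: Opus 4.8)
\medskip
\noindent\textbf{Proof strategy.}
The plan is to combine the interior and boundary H\"older estimates in the style of Guedj--Ko\l odziej--Zeriahi and Charabati--Zeriahi with an exhaustion of $\Om$ that, at each stage, replaces $\mu$ by a measure of \emph{finite} total mass, and then to pass to the limit while keeping all constants uniform. First I would solve $H_m(h)=0$ in $\Om$ with $h=\phi$ on $\d\Om$; since $\Om$ is strictly $m$-pseudoconvex and $\phi\in C^{0,\al}(\d\Om)$ one has $h\in SH_m(\Om)\cap C^{0,\al}(\bar\Om)$ in the standard way. As $H_m(\vphi+h)\ge H_m(\vphi)\ge\mu$ and $\vphi\le 0$, the function $\tilde\vphi:=\vphi+h\in SH_m(\Om)\cap C^{0,\al}(\bar\Om)$ is a H\"older subsolution with $\tilde\vphi=\phi$ on $\d\Om$ and $\tilde\vphi\le h$. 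For $j$ large put $\Om_j:=\{\rho<-1/j\}\Subset\Om$, which is strictly $m$-pseudoconvex with defining function $\rho+1/j$ of $C^2$-norm and diameter bounded uniformly in $j$; by the Chern--Levine--Nirenberg inequality the measure $\mathbf 1_{\Om_j}\mu$ has finite total mass, $\mathbf 1_{\Om_j}\mu(\Om_j)\le H_m(\vphi)(\bar\Om_j)<\infty$.

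\emph{The limit construction.} On each $\Om_j$ the measure $\mathbf 1_{\Om_j}\mu$ has finite mass and is dominated by $H_m(\tilde\vphi)$, so the Charabati--Zeriahi/Benali--Zeriahi theorem produces $u_j\in SH_m(\Om_j)\cap C(\bar\Om_j)$ with $H_m(u_j)=\mathbf 1_{\Om_j}\mu$ and $u_j=h$ on $\d\Om_j$. By the comparison principle $\tilde\vphi\le u_j\le h$ on $\Om_j$; and since $\mathbf 1_{\Om_j}\mu$ and $\mathbf 1_{\Om_{j+1}}\mu$ agree on $\Om_j$ while $u_{j+1}\le h$ there, the comparison principle also gives $u_{j+1}\le u_j$ on $\Om_j$. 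Hence $u:=\lim_j u_j=\inf_j u_j\in SH_m(\Om)$ with $\tilde\vphi\le u\le h$; the convergence theorem for decreasing sequences gives $H_m(u)=\mu$ (as $H_m(u_j)=\mathbf 1_{\Om_j}\mu\to\mu$ weakly), and squeezing between $\tilde\vphi$ and $h$, both of which tend to $\phi$ at $\d\Om$, gives $u=\phi$ on $\d\Om$. This settles solvability of \eqref{eq:heq}, \emph{provided} $u$ is continuous --- which follows from the next step.

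\emph{Uniform H\"older estimates.} The goal is $\|u_j\|_{C^{0,\al'}(\bar\Om_j)}\le C$ with $\al'>0$ and $C$ independent of $j$; then letting $j\to\infty$ yields $u\in C^{0,\al'}(\bar\Om)$. Near $\d\Om_j$ I would sandwich $u_j$ between the H\"older barriers $h$ (from above, using $H_m(h)=0$) and $\tilde\vphi$ (from below, since $H_m(\tilde\vphi)\ge H_m(u_j)$ and $\tilde\vphi\le h=u_j$ on $\d\Om_j$), obtaining $|u_j(z)-u_j(\zeta)|\le C|z-\zeta|^{\al}$ for $\zeta\in\d\Om_j$, with $C$ depending only on $\|h\|_{C^{0,\al}(\bar\Om)}$ and $\|\vphi\|_{C^{0,\al}(\bar\Om)}$. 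In the interior I would set $\hat u_{j,\de}(z):=\sup_{|w|\le\de}u_j(z+w)$, an $m$-subharmonic function lying above $u_j$ on $\{z\in\Om_j:\mathrm{dist}(z,\d\Om_j)>\de\}$, and prove $\hat u_{j,\de}-u_j\le C\de^{\al'}$ on $\{\mathrm{dist}(\cdot,\d\Om_j)>2\de\}$ by the Ko\l odziej-type scheme (comparison of $\hat u_{j,\de}$ with an auxiliary Dirichlet solution, together with the stability and $L^\infty$ estimates). The point is that every constant in this scheme is controlled by the domination $\mathbf 1_{\Om_j}\mu\le H_m(\vphi)$ and the H\"older continuity of $\vphi$ --- through the capacity--mass estimates available for H\"older continuous $m$-subharmonic potentials --- and by the uniformly bounded quantities $\|u_j\|_{L^\infty}\le\max(\|h\|_\infty,\|\vphi\|_\infty)$, $\|\rho+1/j\|_{C^2(\bar\Om_j)}$, $\mathrm{diam}(\Om_j)$; it is \emph{never} controlled by the total mass of $\mu$ nor by $j$. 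Replacing $z$ by $z+w$ and $w$ by $-w$ turns $u_j(z+w)-u_j(z)\le C\de^{\al'}$ into a two-sided interior H\"older bound with uniform constant, and a routine gluing with the boundary estimate (comparing $|z-y|$ with $\mathrm{dist}(z,\d\Om_j)$ and $\mathrm{dist}(y,\d\Om_j)$) produces the uniform global bound.

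\emph{Main obstacle.} The serious point is precisely the locality assertion above: one must go through the H\"older machinery of Benali--Zeriahi and Charabati--Zeriahi and verify that neither the H\"older exponent nor the H\"older constant of the solution on $\Om_j$ depends on $\mathbf 1_{\Om_j}\mu(\Om_j)$ (which may tend to $\infty$ as $j\to\infty$), but only on $\al$, $\|\vphi\|_{C^{0,\al}(\bar\Om)}$, $\|\phi\|_{C^{0,\al}(\d\Om)}$, $\rho$, $n$ and $m$. The remaining ingredients --- the comparison-principle manipulations and the decreasing-limit convergence $H_m(u_j)\to H_m(u)$ along the shrinking family $\{\Om_j\}$ --- are routine once the uniform $L^\infty$ bound is in hand.
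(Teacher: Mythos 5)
Your overall architecture (solve first, then prove a uniform H\"older estimate) is reasonable, and the exhaustion $\Om_j=\{\rho<-1/j\}$ with monotone limits is a legitimate way to get \emph{a} bounded solution with $H_m(u)=\mu$ and the right boundary values. But the step you label ``main obstacle'' is not a verification to be deferred --- it is the entire content of the theorem, and as stated your claim is false. The constants in the Benali--Zeriahi and Charabati--Zeriahi H\"older/modulus-of-continuity estimates \emph{do} depend on the total mass of the measure. Concretely, the $L^1(\mu)$--$L^1(dV_{2n})$ stability estimate (the analogue of Theorem~\ref{thm:l1-l1}) is proved by an induction that integrates by parts against $(dd^c\vphi)^k\wed\be^{n-k-1}$ and requires bounds on $\int dd^c(u+v)\wed(dd^c\vphi)^k\wed\be^{n-k-1}$ and on $\int(v-u)\,(dd^c\vphi)^k\wed\be^{n-k}$; when $H_m(\vphi)$ has infinite mass near $\d\Om$ (as in Example~1.2(a)), the corresponding quantities on $\Om_j$, namely $\mu(\Om_j)$ and $\int_{\Om_j}(dd^c\vphi)^k\wed\be^{n-k}$, blow up as $j\to\infty$. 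So applying the finite-mass theorem on $\Om_j$ gives H\"older norms for $u_j$ that are \emph{not} uniform in $j$, and your limit argument collapses exactly at the point where you need it. Nothing in your proposal supplies the mechanism that controls this blow-up.

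What the paper does instead is work directly on $\Om$ (no exhaustion) and make the blow-up quantitative rather than avoid it: it replaces $\vphi$ by the truncation $\wtd\vphi_\veps=\max\{\vphi-\veps,\,M\rho/\veps\}$, which is $m$-subharmonic in a neighborhood of $\bar\Om$, still dominates $\mu$ on $D_{\veps}$, and has norms and total Hessian masses of order $\veps^{-1}$ and $\veps^{-k}$ respectively (\eqref{eq:norm-extension}, Lemma~\ref{lem:mass-bound}). The capacity estimate, the sup-stability (Proposition~\ref{prop:stability}) and the $L^1$--$L^1$ stability (Theorem~\ref{thm:l1-l1}) are then proved with constants that are explicit negative powers of $\veps$ ($C\veps^{-2m}$ and $C\veps^{-(m+1)}$), and the final Guedj--Ko\l odziej--Zeriahi iteration closes because one chooses $\veps=\de^{\al_2\al_3/(3m+1+\al)}$, so the positive power $\de^{\al_2\al_3}$ coming from Lemma~\ref{lem:lap-est} absorbs the losses in $\veps$. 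If you want to keep your exhaustion scheme, you would have to carry out exactly this bookkeeping on each $\Om_j$ with constants depending polynomially on the distance of the relevant sets to $\d\Om_j$ --- i.e.\ you would be reproving the paper's Sections~2 and~3, not citing them. I would also note, as a minor point, that your deduction $u_{j+1}\le u_j$ from equal Hessian measures on $\Om_j$ and the boundary inequality needs the uniqueness/domination form of the comparison principle (not just the measure inequality form), though this is standard for continuous data.
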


This result was proved in \cite{KN20} for a compactly supported measure $\mu$ in $\Om$. Later, Benali-Zeriahi \cite[Theorem~B]{BZ20} (in a special case) and
Charabati and Zeriahi \cite[Theorem~4]{ChZ20} solved the Dirichlet problem  \eqref{eq:heq} $\&$ \eqref{eq:holder-s} for any  $\mu$ having finite total mass,  i.e., $\mu(\Om)<+\infty$. 
More precisely, it was a consequence of a general statement \cite[Theorem~3]{ChZ20} for the modulus of continuity under such an  assumption. In other words, they obtained the H\"older continuous subsolution theorem for finite mass measures. Here, we are able to remove the finite mass assumption. The method to prove the theorem is inspired by  \cite{BZ20} and \cite{KN20}. As in \cite{KN20} we do not invoke neither the viscosity solutions nor the rooftop envelopes. However, the H\"older exponent of the solution is worse than the one in \cite{BZ20} and \cite{ChZ20} (see also \cite{BZ20c}).

In general the complex Hessian measure of a H\"older continuous $m$-subharmonic function may have unbounded total mass.  Thus, for applications it is desirable to prove the H\"older continuous subsolution theorem for possibly unbounded total mass measures. Let us consider specific examples.

\begin{expl} Let $\Om$ be a strictly $m$-pseuconvex domain in $\bC^n$ and $\rho$ is a $C^2$ strictly $m$-subharmonic defining function of $\Om$ as above.
\begin{itemize}
\item 
[(a)] Consider the function $ - (-\rho)^\al$ for some $0<\al < 1$. Then, it is H\"older continuous on $\bar\Om$ and $m$-subharmonic in $\Om$. It is easy to see that for any $x \in \d\Om$ and $U_x$ a neighborhood of $x$,
$$
	\int_{\Om \cap U_x} H_m(-|\rho|^\al) = +\infty. 
$$
It follows that the measure $\mu = {\bf 1}_{U_x} \cdot H_m(-|\rho|^\al)$ has unbounded mass on $\Om$ and $\mu \leq H_m(-|\rho|^\al)$.
\item
[(b)] Take $G \subset \Om$ be such that $\mu := {\bf 1}_G \cdot H_m(-|\rho|^\al)$ satisfies $\mu( \Om) <+\infty$. Then, it is not clear how to find $\vphi \in \cE^0_{m}(\Om) \cap C^{0,\al}(\bar\Om)$ such that $\mu \leq H_m(\vphi)$, where $\cE^0_{m}(\Om)$ is the Cegrell class of bounded $m$-subharmonic functions whose boundary values are zero and whose total masses of complex Hessian measures are finite.
\end{itemize}
\end{expl}
 
 \begin{expl}
 Take $\vphi \in SH_m(\Om) \cap C^{0,\al}(\bar\Om)$ such that $\vphi \neq 0$ on $\d\Om$. Assume $\mu \leq H_m(\vphi)$ in $\Om$ and $\int_\Om H_m(\vphi) <+\infty$. Then, it is not clear if one can  find $\vphi'$  H\"older continuous $m$-subharmonic function such that $\vphi'=0$ on $\d\Om$ and $\int_\Om H_m(\vphi')<+\infty$.
\end{expl}

The H\"older regularity of solutions to complex Hessian equations or fully non-linear elliptic equations on compact K\"ahler manifold  is also  very active field of research see e.g., \cite{CY22}, \cite{DDGKPZ14}, \cite{DN16}, \cite{ko08} and \cite{WZ24}. It is worth to point out that the H\"older regularity of quasi $m$-subharmonic solutions to the equation on a compact K\"ahler manifold,
under similar hypothesis as the ones mentioned above, is widely open. There is a partial progress made by Cheng and Yu \cite{CY22} in case nonnegative holomorphic bisectional curvature of the metric.

\bigskip
\bigskip
{\em Acknowledgement.}   The paper has been completed while the second author was visiting  VIASM (Vietnam Institute for Advanced Study in Mathematics). He would like thank to the institute for the great hospitality and  excellent working conditions.  The first author is partially supported by  grant  no. 2021/41/B/ST1/01632 from the National Science Center, Poland.

\section{Hessian measures dominated by capacity}

In this section we consider the positive Radon measure $\mu$ which is dominated by the complex Hessian measure associated to a H\"older continuous $m$-subharmonic function $\vphi$ satisfying  \eqref{eq:subsolution} and \eqref{eq:boundary-zero}. Our goal is to prove that there exist uniform  constants $C, \al_0>0$ depending only on $\vphi$ and $\Om$ such that for every compact set $K \subset \Om$,
$$
	\mu (K) \leq C  \left[cap_m(K, \Om) \right]^{1+\al_0},
$$
where the $m$-capacity defined as in \cite{DK14}, namely
$$
	cap_m(K,\Om) = \sup \left\{\int_K H_m(w): w \in SH_m(\Om), \; -1\leq w\leq 0\right\}.
$$
This inequality was proved under the assumption of the compact support  of the measure in \cite{KN20}. Later it was proved without this extra hypothesis by Benali and Zeriahi \cite{BZ20} and a more general formulation was given by Charabati and Zeriahi \cite{ChZ20}, where they  employed the viscosity solution. Here we give an alternative proof with a smaller exponent  but it  does not invoke neither viscosity solutions nor the rooftop envelope.  This is done by slightly extending the H\"older continuous subsolution to a neighborhood of $\bar\Om$ using the defining function $\rho$. The cost is the growth of the H\"older norm of the new subsolution and we need to carefully analyze it. 

Since $\rho$ is a $C^2$  defining function of $\Om$,
we can assume that $\rho$ is a strictly $m$-subharmonic function in a neighborhood $\wtd\Om$ of $\bar \Om$.
Let us define for $\veps>0$
the sets
$$
	\Om_\veps = \{ z \in \Om : {\rm dist} (z, \d\Om) > \veps\},
$$
and 
$$
	D_\veps = \{z \in \Om : \rho(z) <-\veps\}.
$$
These two domains are comparable as, by the  Hopf lemma,  there exist constants $c_0,\veps_0>0$ such that 
\[\label{eq:constant-c0}
	\Om_\veps \subset D_{c_0\veps} \quad\text{for every }0< \veps \leq \veps_0.
\]
Let us consider the following extension of the subsolution $\vphi$
\[\label{eq:ext-subs}
	\psi := \max\left\{ \vphi -\veps, \frac{M\rho}{c_0\veps} \right\}, 
\]
where  $M = 1+\|\vphi\|_{L^\infty(\Om)}$. Clearly, $\psi$ is defined on $\wtd\Om$. Furthermore, 
$\psi = \vphi- \veps$ on $\Om_\veps$ and $\psi = M \rho/ (c_0\veps)$ near $\d\Om$. Thus, $\psi$ is $\al$-H\"older continuous and $m$-subharmonic in the neighborhood $\wtd\Om$ of $\bar\Om$ and
\[	
	H_m(\vphi) = H_m(\psi) \quad \text{on } \Om_\veps.
\]
Let us fix an $\veps_0>0$ depending only on $\Om$ such that for every $0<\veps\leq\veps_0$, 
\[\label{eq:range-epsilon}
	\bar\Om \subset \{ z \in \wtd\Om : {\rm dist}(z, \d \wtd\Om)>\veps\}.
\]
As $\vphi -\veps \leq \psi \leq 0$,
\[\label{eq:ext-norm-psi}
	\|\psi\|_{L^\infty (\bar\Om)} \leq M .
\]
Now we are ready  to give a slightly different proof of the following result due to Benali and Zeriahi \cite{BZ20}. 
\begin{thm} \label{thm:BZ}
There exist constants $A_1 >0$ and $\al_0>0$ such that for any compact set $K \subset \Om$ 
$$
	\int_K H_m(\vphi) \leq   A_1\left[cap_m(K)\right]^{1+\al_0}.
$$ 
Precisely,  $A_1 = C \|\vphi\|_{L^\infty(\Om)}^m \|\vphi\|_{C^{0,\al}(\bar\Om)}^m$ with $C$ a uniform constant depending only on $\Om$ and $\al_0$ which  is explicitly computable in terms of $\al, m$ and $n$.
\end{thm}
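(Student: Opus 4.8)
The plan is to estimate $\int_K H_m(\vphi)$ by comparing $\vphi$ (through its extension $\psi$) with the relative extremal function of $K$ and exploiting the $\al$-H\"older continuity of $\psi$ on the larger domain $\wtd\Om$. First I would fix a compact $K\subset\Om$, set $t = cap_m(K,\Om)$, and let $u_{K}=u_{K,\Om}$ be the relative extremal function, so that $-1\le u_K\le 0$, $u_K\in SH_m(\Om)$, and $\int_\Om H_m(u_K^*) = cap_m(K,\Om)$, with $u_K^*=-1$ quasi-everywhere on $K$ and $u_K^*=0$ on $\d\Om$. The key idea is a volume-of-sublevel-set argument: by the Chebyshev-type inequality coming from the capacity bound, for a suitable choice of sublevel parameter $s$ the set $\{u_K < -s\}$ is "small" in capacity, hence (using a Ko\l odziej/Cegrell-type volume estimate for $m$-sh functions, or the $L^p$-ness of $\be^n$) it is small in Lebesgue measure.

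Second, I would convert this into an estimate on $\mu(K)\le\int_K H_m(\vphi)$. Since $H_m(\vphi)=H_m(\psi)$ on $\Om_\veps$ and $\psi$ is globally defined and $\al$-H\"older on $\wtd\Om$ with $\|\psi\|_{L^\infty(\bar\Om)}\le M$ and H\"older norm controlled by $\|\vphi\|_{C^{0,\al}}/(c_0\veps)$ (from \eqref{eq:ext-subs}), I can invoke the standard fact that a H\"older continuous $m$-sh function can be regularized: its sup-convolutions or $\de$-mollifications $\psi_\de$ satisfy $\psi_\de - \psi = O(\de^\al)$ and are smooth $m$-sh on slightly shrunk domains. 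The mechanism is then the classical "comparison on a sublevel set": on the open set $U=\{u_K < -s\}$ one has $u_K = -1 < \psi_\de/M - 1 \le$ (comparison) outside a capacity-null set, and one compares $H_m$ of $\psi_\de$ against $H_m$ of a convex combination $\tfrac{1}{M}\psi_\de + (1-\tfrac1s\,\text{stuff})\,u_K$ type barrier, integrating by parts to trade a factor of $cap_m(K)$ against a factor of $\veps^{-m}$ and the H\"older modulus. Balancing $\veps$ against $t=cap_m(K)$ — roughly choosing $\veps\sim t^{\beta}$ for an exponent $\beta$ depending on $\al,m,n$ — yields $\int_K H_m(\vphi)\le C\,t^{1+\al_0}$ with $\al_0$ explicitly a function of $\al,m,n$, and the constant scaling like $\|\vphi\|_{L^\infty}^m\|\vphi\|_{C^{0,\al}}^m$ because each of the $m$ factors of $dd^c\psi$ contributes one power of the H\"older norm and the $L^\infty$ bound \eqref{eq:ext-norm-psi} is used $m$ times in the integration by parts.

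Concretely, the chain of inequalities I expect is: $\mu(K)\le \int_K H_m(\psi)$; cut $K$ by the sublevel set $\{u_K<-s\}$ for $s\in(0,1)$; on this set replace $\psi$ by $M\,u_K + (\text{bounded error of size }O(\veps) + O(\de^\al/?))$ and integrate by parts $m$ times, each time moving a $dd^c$ off $\psi_\de$ onto $u_K$, which produces $\int H_m(u_K)\le cap_m(K)$ together with boundary/gradient terms controlled by $\|\psi\|_{C^{0,\al}}/\veps$ and $\|\psi\|_{L^\infty}$; this gives $\mu(\{u_K<-s\}\cap K)\le C s^{-m}\veps^{-m}\|\vphi\|_{C^{0,\al}}^m M^m\,cap_m(K)\cdot(\text{small factor from }\de$-regularization$)$, and then optimizing over $s,\de,\veps$ as powers of $cap_m(K)$ produces the stated $(1+\al_0)$ power. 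The main obstacle — and the place where the argument with the extended subsolution really earns its keep — is handling the boundary behavior: $\vphi$ only vanishes on $\d\Om$ but $H_m(\vphi)$ can have infinite mass near $\d\Om$, so one cannot integrate by parts directly on $\Om$; the extension $\psi$, which near $\d\Om$ equals the smooth function $M\rho/(c_0\veps)$, is precisely what allows the integration by parts to be carried out on a fixed neighborhood $\wtd\Om$ of $\bar\Om$ at the controlled cost of the blow-up factor $\veps^{-1}$ in the H\"older norm. Keeping careful track of how this $\veps^{-1}$ (raised to the $m$-th power after the $m$-fold integration by parts) competes against the capacity gain, and checking that the optimal $\veps$ stays in the admissible range $(0,\veps_0]$ dictated by \eqref{eq:range-epsilon} and \eqref{eq:constant-c0}, is the delicate bookkeeping that the proof must get right.
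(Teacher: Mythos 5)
There is a genuine gap, in two places. First, the very first link of your ``concrete chain,'' namely $\mu(K)\le \int_K H_m(\psi)$, fails for the part of $K$ lying in the collar $\Om\setminus\Om_\veps$: the extension $\psi=\max\{\vphi-\veps, M\rho/(c_0\veps)\}$ satisfies $H_m(\psi)=H_m(\vphi)$ only on $\Om_\veps$, while near $\d\Om$ one has $\psi=M\rho/(c_0\veps)$, so there $H_m(\psi)=(M/(c_0\veps))^m H_m(\rho)$, which bears no relation to $H_m(\vphi)$ (the latter can even have infinite mass near $\d\Om$, as in the paper's Example with $-(-\rho)^\al$). So the extension cannot be used to dominate $H_m(\vphi)$ near the boundary, and your diagnosis that ``the extension $\psi$ is what handles the boundary'' is not how the difficulty is resolved. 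The paper instead splits $K=(K\cap\Om_\veps)\cup(K\setminus\Om_\veps)$ and treats the boundary piece $F\subset\Om\setminus\Om_{2\veps}$ by a separate argument (Benali--Zeriahi): since $\vphi=0$ on $\d\Om$ and is $\al$-H\"older, $|\vphi|\le \de:=2^\al c_1\veps^\al$ on $F$, and the comparison principle applied to the set $\{\de u_F<\vphi\}$ gives $\int_F H_m(\vphi)\le \de^m\,cap_m(F)$ --- an estimate on $H_m(\vphi)$ itself, with the gain $\veps^{m\al}$. Your proposal contains no substitute for this step.

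Second, on the interior piece your mechanism, as written, only produces bounds that are \emph{linear} in $cap_m(K)$ multiplied by powers of $s^{-1}$, $\veps^{-1}$ and $\de^\al$: moving $dd^c$ factors onto $u_K$ by integration by parts yields $\int H_m(u_K)\le cap_m(K)$ and nothing more, and no choice of $s,\de,\veps$ as powers of $t=cap_m(K)$ can then manufacture the strictly superlinear exponent $1+\al_0$. The superlinearity in the paper comes from one specific input you never actually deploy: the Dinew--Ko{\l}odziej volume--capacity inequality \cite[Proposition~2.1]{DK14}. Concretely, the mollification $\psi_\veps$ satisfies $dd^c\psi_\veps\le CM\veps^{-2}\be$, hence on the auxiliary set $E'=\{3\de u_E+\psi_\veps<\psi-2\de\}$ (whose capacity is comparable to $cap_m(E)$ by the comparison principle) one gets $\int_{E'}H_m(\psi_\veps)\le CM^m\veps^{-2m}[cap_m(E')]^{p}$ with $1<p<n/m$; it is exactly this term $\veps^{-2m}[cap_m(K)]^{p-1}\cdot cap_m(K)$, balanced against the boundary term $\veps^{m\al}cap_m(K)$ and the error term $\veps^{\al}cap_m(K)$ by choosing $\veps=[cap_m(K)]^{(p-1)/(2m+\al)}$ (with a separate trivial case when this exceeds $\veps_0$), that yields $\al_0=\al(p-1)/(2m+\al)$. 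You allude to ``a Ko{\l}odziej/Cegrell-type volume estimate'' and ``the $L^p$-ness of $\be^n$,'' but you apply it to the Lebesgue smallness of $\{u_K<-s\}$ rather than to $\int H_m(\psi_\veps)$ over the comparison set, so the decisive higher power of capacity never appears in your chain; without it the announced optimization cannot close.
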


Following the idea in \cite{BZ20} we divide a compact $K$ into two parts $K \cap \Om_\veps$ and  $K\setminus \Om_\veps$. To deal with  the first part we need

\begin{lem}\label{lem:first-part} Fix $1<p<n/m$ and  $0<\veps \leq \veps_0$, where $\veps_0$ is given in \eqref{eq:constant-c0}. Let $E \subset \Om_{2\veps}$ be a Borel set. Then, 
$$
	\int_E H_m(\vphi) \leq A_2 \left(\veps^\al + \veps^{-2m} [cap_m(E)]^{p-1} \right)  cap_m(E),
$$
where $A_2=C  \|\vphi\|_{L^\infty(\Om)}^m \| \vphi\|_{C^{0,\al} (\bar\Om)}$.
\end{lem}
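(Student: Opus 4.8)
The plan is to exploit the local comparison between the Hessian measure of $\vphi$ and the $m$-capacity on the interior piece $\Om_{2\veps}$, where $\vphi$ agrees (up to the additive constant $-\veps$) with the extended subsolution $\psi$. First I would cover the given Borel set $E\subset\Om_{2\veps}$ by a fixed large ball $B$ containing $\bar\Om$ and write $\mu_\vphi := H_m(\vphi)=H_m(\psi)$ on $\Om_\veps$. The standard route, following the Monge-Amp\`ere template of \cite{ko95} and its Hessian analogue in \cite{KN20}, is to compare $\mu_\vphi$ on $E$ with the Hessian measure of the relative extremal function $u_E = u_{E,\Om}$ of $E$ in $\Om$: recall $u_E\in SH_m(\Om)$, $-1\le u_E\le 0$, and $\int_\Om H_m(u_E) = cap_m(E)$ with the mass carried on $\bar E$. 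The key estimate to produce is a Chern-Levine-Nirenberg / stability bound of the form
\[
	\int_E H_m(\vphi) \le C \int_E (-u_E)^{?} \, (dd^c\vphi)^{\,j}\wed(dd^c u_E)^{m-j}\wed\be^{n-m} + \text{(lower order)},
\]
from which, using $|\vphi(z)-\vphi(w)|\le \|\vphi\|_{C^{0,\al}}|z-w|^\al$, one extracts a factor $\veps^\al$ when the mutual distances are small, plus a remainder controlled by an $L^p$-type bound.

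Concretely, the two terms in the asserted bound should arise as follows. The $\veps^\al$-term comes from comparing $\vphi$ on $E$ with a smoothing or a translate/regularization at scale $\veps$ (legitimate because $E$ sits at distance $\ge 2\veps$ from $\d\Om$): the $\al$-H\"older modulus of continuity at scale $\veps$ feeds one power of $\veps^\al$ against $\|\vphi\|_{C^{0,\al}}$, and the remaining $m-1$ differentiations are absorbed into $\|\vphi\|_{L^\infty}^{m}$ via iterated integration by parts and the Chern-Levine-Nirenberg inequalities, producing exactly $A_2 = C\|\vphi\|_{L^\infty}^m\|\vphi\|_{C^{0,\al}}$ times $\veps^\al\, cap_m(E)$. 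The second term, $\veps^{-2m}[cap_m(E)]^{p-1}cap_m(E)$, is where one pays for the regularization error: the error between $\vphi$ and its $\veps$-mollification is an $m$-sh function whose Hessian mass is controlled using the $C^2$-defining function $\rho$ — the factor $M\rho/(c_0\veps)$ in \eqref{eq:ext-subs} contributes the $\veps^{-1}$ in each of the $m$ slots after one takes $dd^c$, hence $\veps^{-m}$, and a further $\veps^{-m}$ appears from estimating the cutoff to $\Om_{2\veps}$ versus $\Om_\veps$ (gradient of the cutoff $\sim\veps^{-1}$), giving $\veps^{-2m}$ in total; the $[cap_m(E)]^{p-1}$ factor is the gain from a Sobolev/$L^p$ inequality for $m$-sh functions (the hypothesis $1<p<n/m$ is precisely the range where $SH_m\cap L^\infty$ embeds with exponent beating $1$), applied to $(-u_E)$, whose $L^{p}$-norm is bounded by a power of $cap_m(E)$.

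The main obstacle I anticipate is bookkeeping the \emph{exact} powers of $\veps$ in the regularization-error term so that they come out as $\veps^{-2m}$ and not something worse, while simultaneously keeping the $cap_m(E)$-dependence at the stated power $1+(p-1)=p$ with the companion term genuinely linear in $cap_m(E)$. This requires: (i) choosing the right comparison function — likely $u_E$ itself, or $u_E$ shifted by $\veps^\al$, rather than a sup-convolution — so that only $m$ (not $2m$) integrations by parts are needed for the leading term; (ii) invoking the Sobolev-type inequality for $m$-subharmonic functions (available for $p<n/m$, as in \cite{DK14}) to convert an integral $\int_E(-u_E)^{p}\,\be^n$ into $[cap_m(E)]^{p}$ or better; and (iii) handling the cutoff near $\d\Om_{2\veps}$ so that its contribution is absorbed into the $\veps^{-2m}$ factor rather than creating a boundary term. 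Once these are in place, splitting $|\vphi - (\text{mollification})|\le \|\vphi\|_{C^{0,\al}}\veps^\al$ on the good set and bounding the error Hessian mass by $C\veps^{-2m}$ on the bad set, then multiplying by $cap_m(E)$ throughout, yields the claimed inequality with $A_2 = C\|\vphi\|_{L^\infty(\Om)}^m\|\vphi\|_{C^{0,\al}(\bar\Om)}$.
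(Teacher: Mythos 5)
Your proposal correctly identifies two of the raw ingredients --- mollifying the extended subsolution at scale $\veps$ so that the H\"older modulus produces the $\veps^\al$ factor, and a volume--capacity type inequality in the range $1<p<n/m$ to produce the $[cap_m(E)]^{p-1}$ gain --- but the central mechanism of the proof is missing, and your accounting for $\veps^{-2m}$ is not the right one. The engine of the argument is a single application of the comparison principle on a well-chosen sublevel set: with $\psi_\veps=\psi*\chi_\veps$ and $\de:=\|\vphi\|_{C^{0,\al}(\bar\Om)}\veps^\al$ (so that $0\le\psi_\veps-\psi\le\de$ on $\bar\Om_{2\veps}$), one sets $E':=\{3\de u_E+\psi_\veps<\psi-2\de\}$ and checks $E\subset E'\subset\{u_E<-1/2\}\subset\subset\Om$, whence $cap_m(E)\le cap_m(E')\le 2^m cap_m(E)$. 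Then $\int_{E'}H_m(\psi)\le\int_{E'}H_m(3\de u_E+\psi_\veps)$, and expanding the right-hand side yields precisely the two terms: $3\de\,M^m cap_m(E')$ and $\int_{E'}H_m(\psi_\veps)$. Your ``key estimate to produce,'' left with an unspecified exponent and invoked via iterated integration by parts and Chern--Levine--Nirenberg bounds, never pins this down; without the set $E'$ and the two-sided capacity comparison there is no clean way to make both terms come out linear (respectively of order $p$) in $cap_m(E)$.

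The second, more concrete error is the origin of $\veps^{-2m}$. It does not come from the $M\rho/(c_0\veps)$ branch of $\psi$ nor from the gradient of a cutoff --- there is no cutoff in the argument, and on $\Om_\veps$ one has $\psi=\vphi-\veps$, so the $\rho$-branch plays no role near $E$; its only purpose is to make $\psi$ $m$-subharmonic on a neighborhood of $\bar\Om$ so that the mollification is defined up to the boundary. The factor $\veps^{-2m}$ comes from the standard second-derivative bound for a mollified bounded function, $dd^c\psi_\veps\le C\|\psi\|_{L^\infty}\veps^{-2}\be$, which gives $H_m(\psi_\veps)\le CM^m\veps^{-2m}\be^n$; combining this with the volume--capacity inequality of Dinew--Ko\l odziej, $V_{2n}(E')\le C[cap_m(E')]^p$ for $1<p<n/m$ (applied to the set $E'$, not to an $L^p$ norm of $-u_E$), produces the term $CM^m\veps^{-2m}[cap_m(E)]^p$. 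Your bookkeeping happens to land on the same exponent $-2m$, but by summing two contributions ($\veps^{-m}$ from $dd^c(M\rho/\veps)$ and $\veps^{-m}$ from a cutoff) neither of which is actually present, so the agreement is coincidental rather than a verification.
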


\begin{proof}
Without loss of generality we may assume that $E$ is a regular compact set. This means that  the relative extremal function
$$
	u_E (z) = \sup\{ v(z) : v\in SH_m(\Om): v \leq 0,\; v\leq -1 \text{ on } E\}
$$
is a continuous $m$-subharmonic function on $\bar\Om$. For the general $E$ one can use the fact that
$\mu$ is a Radon measure and $m$-capacity enjoys  the inner regularity property. Recall that $\psi$ is the extension defined in \eqref{eq:ext-subs}. Let us denote by $\psi_\veps = \psi *\chi_\veps$, where $\{\chi_\veps\}_{\veps>0}$ is the radial smoothing kernels, the  standard regularization of $\psi$ in $\wtd\Om$ for $0< \veps \leq \veps_0$. We have by \eqref{eq:range-epsilon} that the domain of $\psi_\veps$ contains $\bar \Om$. Hence, $\psi_\veps \in SH_m(\Om) \cap C^\infty(\bar\Om)$. Using the H\"older continuity of $\psi = \vphi -\veps$ we have 
$$
	0\leq \psi_\veps - \psi \leq c_1 \veps^\al:= \de \quad \text{on } \bar\Om_{2\veps},
$$
where $c_1 = \|\vphi\|_{C^{0,\al}(\bar\Om)}$.
The compactness of $E$ gives $u_E=0$ on $\d\Om$. Also $\psi_\veps \geq \psi$ on $\bar\Om$ by the subharmonicity of $\psi$. Consider the set  $E' := \{3\de u_E + \psi_\veps < \psi - 2\de\}$. Then,
$$	E \subset E' \subset \{u_E < -1/2\} \subset \subset \Om,
$$where  we used the fact $E\subset \Om_{2\veps}$ for the first inclusion and $\psi_\veps\geq \psi$ for the second one. In what follows we write for any Borel set $G \subset \Om$,
\[	cap_m(G) := cap_m(G, \Om)
\]
Clearly, $cap_m(E) \leq cap_m(E')$. On the other hand, by the comparison principle \cite[Theorem~1.4]{DK14},
\[\label{eq:compare}
\begin{aligned}
	cap_m(E') 
&\leq 	cap_m\left( \{u_E < -1/2\} \right) \\
&\leq  	2^m \int_{\{u_E <0\}} (dd^c u_E)^m \wed \beta^{n-m} \\
&=  		2^m cap_m(E). 
\end{aligned}\]
Next, notice from \eqref{eq:ext-norm-psi} that on $\bar\Om$,
$$
	dd^c \psi_\veps \leq \frac{C \|\psi\|_{L^\infty(\Om)} \be}{\veps^{2}}\leq \frac{C M \be}{\veps^{2}}, \quad \|\psi_\veps+ u_E\|_{L^\infty(\Om)} \leq M. 
$$
Since $E \subset \Om_\veps$, 
$\int_E H_m(\vphi) = \int_E H_m(\psi)$.
The comparison principle and the above inequalities give us that
$$\begin{aligned}
\int_{E'} H_m(\psi) 
&\leq		\int_{E'} H_m(3\de u_E + \psi_\veps) \\
&\leq		\int_{E'} 3\de H_m(u_E + \psi_\veps) + \int_{E'} H_m(\psi_\veps) \\
&\leq 	3\de M^m cap_m(E') + C M^{m}\veps^{-2m} \left[cap_m(E')\right]^p,
\end{aligned}$$
where we also used \cite[Proposition~2.1]{DK14} for the last inequality with $1<p < n/m$.
Combining with the equivalence \eqref{eq:compare} of $cap_m(E)$ and $cap_m(E')$ we get that for every Borel set $E \subset \Om_\veps$,
\[\label{eq:first-part}
	\int_E H_m(\vphi) \leq C M^m \left(c_1\veps^\al + \veps^{-2m} [cap_m(E)]^{p-1} \right)  cap_m(E).
\]
Since we may assume $c_1 = \|\vphi\|_{C^{0,\al}(\bar\Om)} \geq 1$, the desired inequality follows.
\end{proof}
The next lemma is due to Benali and Zeriahi \cite[Lemma~4.1]{BZ20} which takes care of the second part in the decomposition.

\begin{lem} Let $F \subset \Om \setminus \Om_{2\veps}$ be a compact set. Then,
\[\label{eq:second-part}
	\int_F H_m(\vphi) \leq  A_3\;\veps^{m\al} cap_m(F),
\]
where $A_3= 2^{m\al}\|\vphi\|_{C^{0,\al}(\bar\Om)}^m$.
\end{lem}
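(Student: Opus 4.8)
The plan is to exploit that $\vphi$ vanishes on $\d\Om$, hence is not very negative near the boundary, and to recognise a truncation of $\vphi$ at this (small) level as an admissible competitor in the definition of $cap_m(F)$; locality of the Hessian operator then does the rest.

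First I would record the boundary estimate. Since $\vphi=0$ on $\d\Om$ by \eqref{eq:boundary-zero} and $\vphi\in C^{0,\al}(\bar\Om)$, for $z\in\Om\setminus\Om_{2\veps}$ and $w\in\d\Om$ with $|z-w|={\rm dist}(z,\d\Om)\le 2\veps$ one gets $|\vphi(z)|=|\vphi(z)-\vphi(w)|\le\|\vphi\|_{C^{0,\al}(\bar\Om)}(2\veps)^\al=:b$; in particular $\vphi\ge-b$ on $F$. Moreover $\vphi\le0$ on $\Om$ by \eqref{eq:boundary-zero} and the maximum principle (recall $SH_m(\Om)\subset SH(\Om)$).

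Next, fixing $\eta>0$, I would consider the open set $U:=\{z\in\Om:\vphi(z)>-(b+\eta)\}$, which contains $F$ by the previous step, and put $w_\eta:=\max\{\vphi/(b+\eta),-1\}\in SH_m(\Om)$. On $U$ one has $\max\{\vphi,-(b+\eta)\}=\vphi$, so by locality of the complex Hessian operator (already used above in the identity $H_m(\vphi)=H_m(\psi)$ on $\Om_\veps$) together with the homogeneity $H_m(cu)=c^mH_m(u)$,
$$ \int_F H_m(\vphi)=\int_F H_m\big(\max\{\vphi,-(b+\eta)\}\big)=(b+\eta)^m\int_F H_m(w_\eta). $$
Since $\vphi\le0$ we have $-1\le w_\eta\le0$, so $w_\eta$ is admissible in $cap_m(F)$ and $\int_F H_m(w_\eta)\le cap_m(F)$; hence $\int_F H_m(\vphi)\le(b+\eta)^m cap_m(F)$ for every $\eta>0$. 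Letting $\eta\to0^+$ and recalling $b=(2\veps)^\al\|\vphi\|_{C^{0,\al}(\bar\Om)}$ yields
$$ \int_F H_m(\vphi)\le b^m\,cap_m(F)=2^{m\al}\,\|\vphi\|_{C^{0,\al}(\bar\Om)}^m\,\veps^{m\al}\,cap_m(F), $$
which is \eqref{eq:second-part}.

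The one point I would be careful about is the truncation level. Since $F$ may meet $\d\Om_{2\veps}$, where the bound $\vphi\ge-b$ is sharp and $\vphi$ might actually equal $-b$, the naive set $\{\vphi>-b\}$ need not contain $F$, and $H_m(\vphi)$ and $H_m(\max\{\vphi,-b\})$ need not agree on it; enlarging the level from $b$ to $b+\eta$ and passing to the limit circumvents this. (Alternatively, the same bound, with the same constant, follows by comparing $\vphi$ with $(b+\eta)u_F^*$, the relative $m$-extremal function of $F$, via the comparison principle \cite[Theorem~1.4]{DK14}, using that $u_F^*=-1$ off an $m$-polar subset of $F$, that $H_m(\vphi)$ puts no mass on $m$-polar sets, and that $H_m(u_F^*)$ is carried by $F$ with total mass at most $cap_m(F)$.)
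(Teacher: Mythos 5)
Your argument is correct, and it takes a genuinely different route from the paper's. The paper (following Benali--Zeriahi) assumes $F$ regular (handling general $F$ by inner regularity of the capacity), sets $\de=2^\al\|\vphi\|_{C^{0,\al}(\bar\Om)}\veps^\al$, introduces the set $F'=\{\de u_F<\vphi\}\supset F$, and applies the comparison principle of \cite{DK14} together with the facts that $H_m(u_F)$ is carried by $F$ with total mass $cap_m(F)$ --- which is essentially the alternative you sketch in your final parenthesis. Your main proof instead truncates: on the open set $\{\vphi>-(b+\eta)\}\supset F$ the locality of $H_m$ gives $H_m(\vphi)=H_m(\max\{\vphi,-(b+\eta)\})$, homogeneity converts this into $(b+\eta)^m H_m(w_\eta)$ with $w_\eta=\max\{\vphi/(b+\eta),-1\}$, and since $\vphi\le 0$ forces $-1\le w_\eta\le 0$, the function $w_\eta$ is itself a competitor in the supremum defining $cap_m(F)$; letting $\eta\to 0^+$ gives exactly $A_3=2^{m\al}\|\vphi\|_{C^{0,\al}(\bar\Om)}^m$. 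What your route buys is economy of means: no regularization of $F$, no continuity of the relative extremal function, no comparison principle, and no properties of the equilibrium measure are needed --- only locality and homogeneity of $H_m$ and the definition of the capacity; moreover your $\eta$-shift cleanly disposes of the borderline case where $\vphi$ attains the value $-b$ on $F$, a point the paper's strict inequality glosses over. What the paper's route buys is methodological uniformity: the same device (comparison principle against a multiple of an extremal or regularized function on a sublevel set) is the engine of Lemma~\ref{lem:first-part} and of the stability estimates later in the paper, so the authors reuse one tool throughout, and it adapts to situations where one only controls $\vphi$ on a sublevel set rather than pointwise. Both arguments produce the same constant, so your proof is a valid, and in fact slightly more elementary, substitute.
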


\begin{proof} Without loss of generality we can assume that $F$ is regular in the sense that the relative extremal function $u_F$ with respect to $\Om$ is continuous as in the proof of Lemma~\ref{lem:first-part}. Since $\vphi$ is H\"older continuous on $\bar\Om$ and $\vphi =0$ on $\d\Om$, we have for $z \in F$
$$
		-\vphi (z) \leq c_1 {\rm dist}(z, \d\Om)^\al < 2^\al c_1 \veps^\al =:\de,
$$
where $c_1= \|\vphi\|_{C^{0,\al} (\bar\Om)}$.
Consider the set 
$F' = \{\de u_F < \vphi \}$. Since $u_F=-1$ on $F$, we have $F \subset F'$. Note that $u_F = 0$ on $\d\Om$, hence $F' \subset \subset \Om$. By the comparison principle 
$$\begin{aligned}
	\int_{F'} H_m(\vphi) &\leq \int_{F'}H_m(\de u_F) \\&= \de^m \int_{F'}H_m (u_F) \\ &= \de^m cap_m(F).
\end{aligned}$$
The desired inequality follows.
\end{proof}

\begin{proof}[End of proof of Theorem~\ref{thm:BZ}]
Now for any compact set $K \subset \Om$ we write $K = (K \cap \Om_\veps) \cup (K  \setminus \Om_\veps).$  Let us denote by $E$ and $F$  the two sets of this union. Then, both of them have $m$-capacity  less than $cap_m(K)$. 
 It follows from \eqref{eq:first-part} and \eqref{eq:second-part} that
\[\label{eq:combination}\begin{aligned}
	\int_K H_m(\vphi) &=   \int_{E} H_m(\vphi) + \int_{F} H_m(\vphi) \\&\leq A_2 \left(\veps^\al + \veps^{-2m}\; [cap_m(E)]^{p-1}\right) cap_m(E) + A_3 \;\veps^{m\al} cap_m(F) \\&\leq A_4 \left(\veps^\al + \veps^{-2m} [cap_m(K)]^{p-1}+ \veps^{m\al}\right) cap_m(K),
\end{aligned}
\]
where $A_4 = \max\{A_2,A_3\}$.
Notice that this inequality holds for every $0<\veps\leq \veps_0$, where $\veps_0$ is fixed and defined in \eqref{eq:range-epsilon}.  

The proof of the theorem will follow by choosing a suitable $\veps>0$. To this end we define  $\al_1>0$ by 
$$
	p-1- 2m \al_1 = \al_1 \al \Leftrightarrow \al_1 = \frac{p-1}{2m+\al}.
$$
If $[cap_m(K)]^{\al_1} \geq \veps_0$, then by \eqref{eq:combination}
$$\begin{aligned}
	\int_K H_m(\vphi) 
&\leq 	A_4\left(\veps^\al_0 + \veps^{-2m}_0 cap_m(K)^{p-1}+ \veps^{m\al}_0\right) cap_m(K) \\
& \leq  	A_4 \left(\left[cap_m(K)\right]^{1+\al_1 \al} + \veps_0^{-2m} [cap_m (K)]^{p} + [cap_m(K)]^{1+ m\al_1  \al} \right).
\end{aligned}$$
So, we get the desired inequality in this case. Otherwise, if  $[cap_m (K)]^{\al_1} < \veps_0$, then we can choose $\veps = [cap_m(K)]^{\al_1}$. It follows from \eqref{eq:combination} that $\int_K H_m(\vphi)$ is less than
$$
	  A_4 \left( \left[cap_m(K)\right]^{1+\al_1\al} +  \left[cap_m (K)\right]^{p- 2m\al_1} + [cap_m(K)]^{1+ m\al_1  \al} \right).
$$
Therefore, the inequality 
$$
	\int_K H_m(\vphi) \leq A_1 \left[cap_m(K)\right]^{1+\al_0}
$$
holds for  $\al_0 = \al_1 \al>0$ and $A_1 = 3 A_4$. The proof of the theorem is completed.
\end{proof}

\section{The H\"older regularity of the solution}

Let us proceed to prove Theorem~\ref{thm:main}.
Thanks to the proof in \cite[Theorem~1.2]{KN20} and Theorem~\ref{thm:BZ} we can find $u \in SH_m(\Om) \cap C^0(\bar\Om)$ satisfying
\[\label{eq:cont-sol}
	(dd^c u)^m \wed \be^{n-m} = \mu, \quad u = \phi \quad\text{on } \d\Om.
\]
The continuous solution has been obtained by Charabati and Zeriahi \cite{ChZ20}  for a more general family of measures: those which are dominated by a complex Hessian measure associated to a continuous $m$-subharmonic function whose modulus of continuity satisfies the Dini-type condition.

Under the H\"older continuity of the boundary data $\phi \in C^{0,\al}(\d\Om)$ we wish to show that 
$$
	u \in C^{0,\al'}(\bar\Om) \quad \text{for some } \al'>0.
$$
Here, we may assume that the H\"older exponent of $\phi$ is the same as the one of $\vphi$, otherwise we can decrease $\al$.
We will see in the proof that the exponent $\al'$ is explicitly computable in terms of $\al, m$ and $n$. 
We follow closely the argument in \cite{Ng20} which deals with the complex Monge-Amp\`ere equation. Consider the sequence of sets $\Om_\veps$ with $\veps>0$ small. Recall that $M= 1+ \|\vphi\|_{L^\infty(\Om)}$ and denote
$$
	\wtd\vphi_\veps = \max\{\vphi -\veps, M \rho/\veps\}.
$$
This is similar to $\psi$ but we need to keep track of the dependence on $\veps$ more carefully this time. Notice that  $\wtd\vphi_\veps$ is $\al-$H\"older continuous and $m$-subharmonic in a neighborhood of $\bar\Om$, its value on the boundary $\d\Om$ is zero. Moreover, for $\veps>0$ small
\[\label{eq:norm-extension}
	\|\wtd\vphi_\veps\|_{L^\infty(\Om)} \leq \frac{A}{\veps},  \quad
	\|\wtd\vphi_\veps\|_{C^{0,\al}(\bar\Om)} \leq \frac{A}{\veps},
\]
where $A = M( \|\vphi\|_{C^{0,\al} (\bar\Om)} + \|\rho\|_{C^1(\bar\Om)})$.
Next, we can follow the lines of the proofs of \cite[Lemma~4.4, Remark~4.5, Corollary~4.6]{Ng20} to get the following two lemmata with the constant $C>0$ depending only on $\Om$ and $\rho$.

\begin{lem} \label{lem:mass-bound}We have for $k=1,...,m$
$$ 	\int_\Om (dd^c\wtd\vphi_\veps)^k \wed \be^{n-m+k}\leq \frac{CM^k}{\veps^k}.
$$
Moreover,  
$$	{\bf 1}_{D_\veps} \cdot \mu \leq H_m(\wtd\vphi_\veps) \quad\text{ on } D_\veps.
$$
\end{lem}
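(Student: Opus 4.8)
The plan is to establish the two claims separately. For the mass bounds, I would exploit the explicit structure of $\wtd\vphi_\veps = \max\{\vphi - \veps, M\rho/\veps\}$. Near $\d\Om$ it coincides with $M\rho/\veps$, which is a fixed $C^2$ function rescaled by $1/\veps$; on $\Om_{c_0\veps}$ (using \eqref{eq:constant-c0}) it coincides with $\vphi - \veps$, which lies between $-M$ and $0$. The key point is that $\wtd\vphi_\veps$ is globally bounded below by $-A/\veps$ and above by $0$ on a neighborhood of $\bar\Om$, so after localizing with the defining function one can compare against a multiple of $\rho$. Concretely, I would use the standard Chern--Levine--Nirenberg type estimate: for an $m$-subharmonic function $w$ with $-\frac{A}{\veps} \le w \le 0$ on a domain slightly larger than $\bar\Om$, one bounds $\int_\Om (dd^c w)^k \wed \be^{n-m+k}$ by integrating against a cutoff $\chi$ that is $1$ on $\bar\Om$ and compactly supported in the larger neighborhood, then integrating by parts $k$ times to move all the $dd^c$'s off $w$ and onto $\chi$ and onto extra copies of $\be$. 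Each integration by parts costs a factor of $\|w\|_{L^\infty} \le A/\veps$ and a factor of $\|dd^c\chi\|$, which is a constant depending only on $\Om$ and the chosen neighborhood (hence only on $\Om$ and $\rho$). This yields exactly the claimed bound $CM^k/\veps^k$ after absorbing the constants from $A$ and the cutoff into $C$. One technical point: one should check that the mixed terms $(dd^c w)^j \wed \be^{n-m+k}$ with $j<k$ also come out with the right powers of $\veps$, which follows by the same integration-by-parts bookkeeping since each $dd^c w$ that survives contributes one factor of the $L^\infty$ norm upon the final integration by parts.

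For the second claim, the point is a comparison of Hessian measures on $D_\veps = \{\rho < -\veps\}$. On the interior region $\Om_{c_0\veps} \supset$ (a subset of $D_\veps$ once one is careful with constants), $\wtd\vphi_\veps = \vphi - \veps$ so $H_m(\wtd\vphi_\veps) = H_m(\vphi) \ge \mu$ by the subsolution hypothesis \eqref{eq:subsolution}; this is the same identity already recorded for $\psi$ in Section~2. On the transition and near-boundary part of $D_\veps$ one has $\wtd\vphi_\veps = M\rho/\veps$ or a max of the two, and there the monotonicity of the Hessian operator under taking maxima, together with the fact that $H_m(M\rho/\veps) \ge 0$ and more importantly that on the overlap region the max is locally one or the other of the two $m$-sh functions, gives $H_m(\wtd\vphi_\veps) \ge \min\{H_m(\vphi-\veps), H_m(M\rho/\veps)\}$ in the sense of the standard inequality for the Hessian of a maximum. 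But we only need the bound ${\bf 1}_{D_\veps}\cdot\mu \le H_m(\wtd\vphi_\veps)$ on $D_\veps$: since $\mu$ itself is carried by all of $\Om$ but we restrict to $D_\veps$, and on $D_\veps$ the function $\wtd\vphi_\veps$ agrees with $\vphi - \veps$ wherever $\rho(z)/\veps < (\vphi(z)-\veps)/M$, i.e. precisely where $M\rho < \veps(\vphi - \veps)$; one checks via the Hopf-lemma comparison \eqref{eq:constant-c0} and the bound $\vphi \ge -M+1$ that this holds throughout a definite subdomain, and on the complementary sliver inside $D_\veps$ one appeals again to the max-inequality for Hessian measures. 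I would phrase this cleanly by noting that $D_\veps \subset \Om_{\veps/(2C_0)}$ for a suitable constant (the reverse Hopf inclusion), so that on $D_\veps$ one is always in the regime where $\wtd\vphi_\veps$ locally equals $\vphi - \veps$, making the identity $H_m(\wtd\vphi_\veps) = H_m(\vphi) \ge \mu$ immediate on $D_\veps$.

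The main obstacle is bookkeeping of the constants in the mass bound: one must verify that all constants produced by the integration-by-parts scheme and by the choice of the intermediate neighborhood $\wtd\Om$ depend only on $\Om$ and $\rho$ and not on $\veps$, and that the powers of $1/\veps$ are exactly $k$ and no worse. This is routine given the Chern--Levine--Nirenberg machinery and is precisely the content of \cite[Lemma~4.4, Remark~4.5, Corollary~4.6]{Ng20} transported to the Hessian setting; accordingly I would state that the proof follows those arguments line by line, indicating only the two substitutions needed (replacing $(dd^c\cdot)^n$ by $(dd^c\cdot)^m\wed\be^{n-m}$ throughout, and replacing the Monge--Amp\`ere comparison principle by \cite[Theorem~1.4]{DK14}), and otherwise referring the reader there.
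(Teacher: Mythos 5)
Your proposal is correct and follows essentially the same route as the paper, which gives no self-contained proof of this lemma but defers to \cite[Lemma~4.4, Remark~4.5, Corollary~4.6]{Ng20}; the Chern--Levine--Nirenberg scheme you describe for the mass bound is exactly the mechanism there. Two remarks on that part. First, to land on the stated constant $CM^k/\veps^k$ with $C$ depending only on $\Om$ and $\rho$, you should bound each integration-by-parts factor by $\|\wtd\vphi_\veps\|_{L^\infty}$ on a fixed neighborhood of $\bar\Om$, which is of order $M\sup|\rho|/\veps$ because $\wtd\vphi_\veps=M\rho/\veps$ outside a compact subset of $\Om$ and $|\wtd\vphi_\veps|\le M$ inside; using the cruder bound $A/\veps$ from \eqref{eq:norm-extension} drags in an extraneous factor $(\|\vphi\|_{C^{0,\al}}+\|\rho\|_{C^1})^k$. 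Second, the exponent $n-m+k$ on $\be$ in the statement is a typo for $n-k$, as the later uses of the lemma (e.g.\ \eqref{eq:i2e-b} with $S_\veps=(dd^c\wtd\vphi_\veps)^k\wed\be^{n-k-1}$) make clear; your bookkeeping is for the correct form.

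For the second claim your conclusion is right but the justification is more tortuous than necessary, and the fallback you keep in reserve would not actually close the argument. The direct point is: on $D_\veps=\{\rho<-\veps\}$ one has $M\rho/\veps<-M$, while $\vphi-\veps\ge -(M-1)-\veps>-M$ for $\veps<1$; hence $\wtd\vphi_\veps=\vphi-\veps$ on the whole open set $D_\veps$, and by locality of the Hessian operator $H_m(\wtd\vphi_\veps)=H_m(\vphi)\ge\mu$ there. No Hopf-type inclusion is needed (and \eqref{eq:constant-c0} goes in the other direction, $\Om_\veps\subset D_{c_0\veps}$, so it is not the right tool here). More importantly, if there genuinely were a ``sliver'' of $D_\veps$ on which the maximum were attained by $M\rho/\veps$, the max-inequality for Hessian measures would only yield $H_m(M\rho/\veps)$ on that sliver, which bears no relation to $\mu$; so that branch of your argument should be removed rather than retained as a safety net. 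With the pointwise comparison above, the sliver is empty and the claim is immediate.
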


Since  the norms of  $\wtd\vphi_\veps$  satisfy \eqref{eq:norm-extension} we can apply Theorem~\ref{thm:BZ} for this H\"older continuous subsolution to get

\begin{lem}
\label{lem:hess-cap} Let $\al_0$ be defined in Theorem~\ref{thm:BZ}. Then, 
for every compact set $K \subset \Om$, 
$$
	\int_K H_m(\wtd\vphi_\veps) \leq \frac{C A^{2m}}{\veps^{2m}} [cap_m (K)]^{1+\al_0}.
$$
\end{lem}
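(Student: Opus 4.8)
The plan is to invoke Theorem~\ref{thm:BZ} with the H\"older continuous subsolution $\wtd\vphi_\veps$ in place of $\vphi$, and simply track how the constant $A_1$ there depends on $\veps$ through the norms of $\wtd\vphi_\veps$. Recall that Theorem~\ref{thm:BZ} asserts the existence of a uniform exponent $\al_0>0$, depending only on $\al,m,n$ (and not on the particular subsolution), and a constant
$$
	A_1 = C\,\|\vphi\|_{L^\infty(\Om)}^m\,\|\vphi\|_{C^{0,\al}(\bar\Om)}^m,
$$
with $C$ depending only on $\Om$, such that $\int_K H_m(\vphi) \le A_1\,[cap_m(K)]^{1+\al_0}$ for every compact $K\subset\Om$. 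The one point that needs a word of justification is that Theorem~\ref{thm:BZ} was stated for a subsolution $\vphi$ that is $m$-subharmonic in $\Om$, H\"older continuous on $\bar\Om$, and vanishing on $\d\Om$; and indeed $\wtd\vphi_\veps$ has exactly these properties: by construction (as noted right after \eqref{eq:norm-extension}) it is $\al$-H\"older continuous and $m$-subharmonic on a neighborhood of $\bar\Om$, and $\wtd\vphi_\veps = M\rho/\veps = 0$ on $\d\Om$. So the theorem applies verbatim to it.

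Concretely, I would argue as follows. Apply Theorem~\ref{thm:BZ} with $\wtd\vphi_\veps$ as the subsolution. This gives, for every compact $K\subset\Om$,
$$
	\int_K H_m(\wtd\vphi_\veps) \le C\,\|\wtd\vphi_\veps\|_{L^\infty(\Om)}^m\,\|\wtd\vphi_\veps\|_{C^{0,\al}(\bar\Om)}^m\,[cap_m(K)]^{1+\al_0},
$$
with the same $\al_0$ as in Theorem~\ref{thm:BZ} and $C=C(\Om)$. Now substitute the bounds \eqref{eq:norm-extension}, namely $\|\wtd\vphi_\veps\|_{L^\infty(\Om)}\le A/\veps$ and $\|\wtd\vphi_\veps\|_{C^{0,\al}(\bar\Om)}\le A/\veps$, to obtain
$$
	\int_K H_m(\wtd\vphi_\veps) \le C\,\Bigl(\frac{A}{\veps}\Bigr)^m\Bigl(\frac{A}{\veps}\Bigr)^m[cap_m(K)]^{1+\al_0} = \frac{C\,A^{2m}}{\veps^{2m}}\,[cap_m(K)]^{1+\al_0},
$$
which is exactly the claimed inequality (after relabelling the $\Om$-dependent constant).

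There is essentially no obstacle here — this is a bookkeeping lemma whose whole content is that the explicit form of $A_1$ in Theorem~\ref{thm:BZ} makes the $\veps$-dependence transparent. The only thing one must be slightly careful about is the restriction $0<\veps\le\veps_0$ implicit in the construction of $\wtd\vphi_\veps$ and in \eqref{eq:norm-extension} (``for $\veps>0$ small''): the inequality is asserted only in that regime, which is all that is needed later. One should also note that $\al_0$ does \emph{not} degenerate as $\veps\to 0$, precisely because in Theorem~\ref{thm:BZ} the exponent $\al_0=\al_1\al = \tfrac{(p-1)\al}{2m+\al}$ was shown to depend only on $\al,m,n$ through the fixed choice of $p\in(1,n/m)$, and not on the sup-norm or H\"older norm of the subsolution; all the $\veps$-dependence has been isolated into the constant.
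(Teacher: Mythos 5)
Your proposal is correct and is exactly the argument the paper intends: Lemma~\ref{lem:hess-cap} is obtained by applying Theorem~\ref{thm:BZ} to the subsolution $\wtd\vphi_\veps$ (which is $m$-subharmonic, $\al$-H\"older on $\bar\Om$, and vanishes on $\d\Om$) and substituting the norm bounds \eqref{eq:norm-extension} into the explicit constant $A_1$, noting that $\al_0$ depends only on $\al,m,n$ and not on the subsolution. Your bookkeeping of the $\veps$-dependence and of the range $0<\veps\le\veps_0$ matches the paper's (essentially one-line) proof.
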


We are in the position to state an analogue of \cite[Proposition~4.7]{Ng20}  for the complex Hessian equation.

\begin{prop} \label{prop:stability}
Let $u$ be the solution in \eqref{eq:cont-sol}. Let $v \in SH_m\cap L^\infty(\Om)$ be such that $v = u$ on $\Om \setminus \Om_\veps$. Then, there is $0<\al_2<1$ such that
$$
	\sup_{\Om} (v-u) \leq \frac{C}{\veps^{2m}} \left(\int_\Om\max\{v-u, 0\} d\mu\right)^{\al_2},
$$
where $C>0$ is a uniform constant independent of $\veps$.
\end{prop}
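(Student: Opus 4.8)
The plan is to prove a stability estimate comparing the solution $u$ of the Hessian equation with any $m$-subharmonic competitor $v$ that agrees with $u$ outside $\Om_\veps$, and the natural tool is the Ko\l odziej-type $L^\infty$ estimate for the complex Hessian equation expressed through the capacity--mass inequality. \emph{First I would} reduce to a volume--capacity comparison: set $U(s) = \{v - u > s\}$ for $s \geq 0$ and observe that since $v = u$ on $\Om \setminus \Om_\veps$, each $U(s)$ is relatively compact in $\Om$, so the relative extremal functions and the comparison principle of \cite[Theorem~1.4]{DK14} apply cleanly. The key chain of inequalities is the standard one: for $0 < t < s$, on $U(s)$ one has $v - u > s$, so choosing the competitor $u + (s-t) u_{U(s)}^* $ (comparing with $v$ on a slightly shrunk level set) and invoking the comparison principle yields
\[\label{eq:cap-mass-level}
	t^m \, cap_m(U(s+t)) \leq \int_{U(s)} H_m(v) \leq \int_{U(s)} d\nu,
\]
where $\nu$ is the Hessian measure of $v$; but we must bound $\int_{U(s)} H_m(v)$ by something involving $\mu$, and here the hypothesis does not directly control $H_m(v)$. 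So instead I would run the comparison the other way: we want to bound $cap_m(U(s))$ in terms of $\int_{U(\cdot)}\, d\mu$, which is exactly the setup of the abstract Ko\l odziej inequality once we know $\mu$ is dominated by capacity. That domination is furnished by Theorem~\ref{thm:BZ} applied not to $\mu$ directly but to the cutoff $\mathbf 1_{D_\veps}\cdot\mu$, which by Lemma~\ref{lem:mass-bound} satisfies $\mathbf 1_{D_\veps}\cdot\mu \leq H_m(\wtd\vphi_\veps)$, and then Lemma~\ref{lem:hess-cap} gives $\mu(K) \leq C A^{2m}\veps^{-2m}\, [cap_m(K)]^{1+\al_0}$ for every compact $K \subset D_\veps$; since $U(s) \subset\subset \Om_\veps \subset D_{c_0\veps} \subset D_\veps$ for the relevant range (after a harmless rescaling of $\veps$), this bound is available on all the level sets in play.

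\emph{Next I would} feed this into the standard iteration. Define $f(s) = \left(cap_m(U(s))\right)^{1/m}$ (or use $[cap_m(U(s))]^{(1+\al_0)/m}$ to exploit the superlinear exponent). Combining the comparison-principle inequality $t^m cap_m(U(s+t)) \leq \int_{U(s)} d\mu$ with the capacity domination $\int_{U(s)}d\mu \leq C A^{2m}\veps^{-2m}[cap_m(U(s))]^{1+\al_0}$ gives the recursive inequality
\[\label{eq:recursion}
	t \, f(s+t) \leq C^{1/m} A^2 \veps^{-2}\, f(s)^{1 + \al_0/m} \cdot f(s)^{-\al_0/m + \text{(bookkeeping)}},
\]
i.e. essentially $t\,f(s+t) \leq C_\veps f(s)^{1+\al_0/m}$ with $C_\veps = C\veps^{-2m/?}$; this is precisely the hypothesis of the classical lemma (Ko\l odziej / \cite[Lemma~2.3 type]{KN20}, \cite[Proposition~4.7]{Ng20}) stating that if $t f(s+t) \leq B f(s)^{1+\gamma}$ for all $s \geq 0$, $0 < t \leq 1$ with $f$ decreasing and right-continuous, then $f(S) = 0$ for $S = S_0$ with $S_0$ controlled by $B$ and $f(0)$, hence $f \equiv 0$ beyond $S_0$, which translates to $\sup_\Om(v-u) \leq S_0 \leq C \veps^{-2m} \left(\int_\Om \max\{v-u,0\}\, d\mu\right)^{\al_2}$ for an explicit $\al_2 = \al_2(\al_0,m) = \al_0/(m+\al_0)$ or similar. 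The initial input $f(0)^m = cap_m(U(0)) \leq cap_m(\{v > u\})$ is bounded crudely by $\int_\Om \max\{v-u,0\}d\mu$ up to a capacity-comparison constant using that $U(0) \subset\subset \Om$, or one starts the iteration at a small positive level $s_0$ chosen proportional to $\left(\int \max\{v-u,0\}d\mu\right)^{\text{power}}$ via Chebyshev: $cap_m(U(s_0)) \leq$ (something) $\cdot \frac{1}{s_0^m}\int_\Om\max\{v-u,0\}d\mu$ after another application of the comparison principle.

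\emph{The main obstacle} I anticipate is the careful tracking of the $\veps$-dependence through the chain. The capacity domination from Lemma~\ref{lem:hess-cap} carries a factor $\veps^{-2m}$, and the comparison-principle step and the Chebyshev step each contribute further inverse powers of the level-set parameters; one must verify that, after optimizing the free parameter in the iteration lemma, the total blow-up is no worse than $\veps^{-2m}$ as claimed, rather than some larger negative power. A secondary technical point is ensuring all the level sets $U(s)$ and their slight enlargements stay inside $D_\veps$ (where the domination $\mathbf 1_{D_\veps}\mu \leq H_m(\wtd\vphi_\veps)$ holds), which is guaranteed by the hypothesis $v = u$ on $\Om\setminus\Om_\veps$ together with \eqref{eq:constant-c0}, but requires replacing $\veps$ by $c_0\veps$ or $2\veps$ in a few places and absorbing the resulting constants into $C$. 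Finally, one should double-check the regularity hypotheses needed to apply the comparison principle \cite[Theorem~1.4]{DK14} and \cite[Proposition~2.1]{DK14}: these hold because $u$ is continuous, $v$ is bounded $m$-subharmonic, and the level sets are relatively compact, so the envelope $u_{U(s)}^*$ is usc and the standard machinery goes through after the usual regularization argument as in Lemma~\ref{lem:first-part}.
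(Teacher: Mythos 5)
This is essentially the paper's own proof: you work with the level sets of $v-u$ (compactly contained in $\Om_\veps$), use the comparison principle to obtain $t^m\, cap_m(U(s+t)) \le \int_{U(s)} d\mu$, dominate $\mu$ on these sets by $C\veps^{-2m}[cap_m(\cdot)]^{1+\al_0}$ via Lemma~\ref{lem:mass-bound} together with Lemma~\ref{lem:hess-cap} (this is exactly the paper's Lemma~\ref{lem:capacity-growth}), and then conclude by the standard Ko\l odziej-type iteration with a Chebyshev start, which is precisely what the paper does by invoking \cite[Theorem~1.1]{GKZ08}. Your initial detour through $H_m(v)$ is self-corrected, and the $\veps$-bookkeeping you worry about is harmless: running the iteration gives a constant no worse than $C\veps^{-2m}$ (in fact a smaller negative power of $\veps$), so the stated bound holds.
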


\begin{proof} 
Without loss of generality we may assume that $ \sup_{\Omega} (v-u)>0$. Set \[s_0 := \inf_{\Omega} (u-v).\] We know that
for $0<s < |s_0|$,
\[
	U(s):=\{u< v + s_0 +s\} \subset \subset \Omega_\veps.
\]
The only difference from \cite[Proposition~2.5]{KN20} is that now the constant $C/\veps^{m}$ appeared on the right hand side. 
As shown in \cite{Ng20} for the Monge-Amp\`ere equation  the proof of the proposition will follow once we have the following inequality.

\begin{lem}\label{lem:capacity-growth}
Let $\al_0>0$ be as in Theorem~\ref{thm:BZ}. Then,  for every $0< s, t < \frac{|s_0|}{2}$,
\[
	t^m cap_m(U(s)) \leq \frac{C(\al_0)}{\veps^{2m}} \left[ cap_m(U(s+t)) \right]^{1+\al_0}.
\]
\end{lem}
\begin{proof}[Proof of Lemma~\ref{lem:capacity-growth}]
Let $0 \leq w \leq 1$ be a $m$-subharmonic function in $\Omega$. By the comparison principle \cite[Theorem~1.4]{DK14},
$$
 \int_{\{u<v+s_0 +s+tw\}} 	H_m(v+tw) \leq		\int_{\{u<v+s_0 +s+tw\}} 	H_m(u),
$$
Since $U(s) \subset \{u<v+s_0+s + tw\} \subset U(s+t)$, it follows that
\[\notag\begin{aligned}
\int_{U(s)} H_m(v+tw) 
&\leq		\int_{\{u<v+s_0 +s+tw\}} 	H_m(v+tw) \\
&\leq		\int_{\{u<v+s_0 +s+tw\}} 	H_m(u),  \\
&\leq		\int_{U(s+t)} d\mu.
\end{aligned}\]
As $t^m \int_{U(s)} H_m(w) \leq \int_{U(s)} H_m(v+ tw )$, taking supremum over all  such  $w$ we get 
\[\label{eq:cap-decay-a}
	t^m cap_m(U(s)) \leq \int_{U(s+t)} d\mu.
\]
On the other hand, by Lemma~\ref{lem:mass-bound}  $${\bf 1}_{D_{c_0\veps}} \cdot d\mu \leq H_m(\wtd\vphi_{c_0\veps}) \quad \text{in } \Om$$ as two measures, where $c_0$ is the constant defined in \eqref{eq:constant-c0}.
Combining this with $U(s+t) \subset \Omega_\veps \subset D_{c_0\veps}$ we have 
\[\label{eq:cap-decay-b}\begin{aligned}
	\int_{U(s+t)} d\mu 
&\leq		 \int_{U(s+t)} H_m(\wtd\vphi_{c_0\veps})\\
&\leq		\frac{C(\al_0)}{(c_0\veps)^{2m}} \left[ cap_m(U(s+t)) \right]^{1+\al_0},
\end{aligned}\]
where the last inequality follows from Lemma~\ref{lem:hess-cap}. Thus, the proof of the lemma follows from \eqref{eq:cap-decay-a} and \eqref{eq:cap-decay-b}.
\end{proof}
Then, the rest of the proof of the proposition is the same as in \cite[Theorem~1.1]{GKZ08} with obvious change of notations.
\end{proof}

An important step is to derive the $L^1 - L^1$ (with respect to the measure $\mu$ and the Lebesgue measure) stability estimate of solutions. The one stated below is an analogue of \cite[Theorem~4.9]{Ng20} for the complex Hessian equation. For the sake of completeness we give the detailed proof.

\begin{thm} \label{thm:l1-l1} 
Let $u$ be the solution in \eqref{eq:cont-sol}. Let $v \in SH_m\cap L^\infty(\Om)$ be such that $v = u$ on $\Om \setminus \Om_\veps$. Then, there is $0<\al_3<1$ such that 
$$
	\int_\Om |v-u| d\mu \leq \frac{C}{\veps^{m+1}} \left(\int_\Om |v-u| dV_{2n}\right)^{\al_3},
$$
where $C>0$ is a uniform constant independent of $\veps$.
\end{thm}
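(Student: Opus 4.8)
The plan is to reduce the $L^1(\mu)$--$L^1(dV_{2n})$ estimate to the already-established sup-bound of Proposition~\ref{prop:stability} together with the capacity-mass inequality of Lemma~\ref{lem:hess-cap}, following the scheme of \cite[Theorem~4.9]{Ng20}. The starting point is the elementary identity
\[\notag
	\int_\Om \max\{v-u,0\}\, d\mu = \int_0^{\sup_\Om(v-u)} \mu(\{v-u>\lambda\})\, d\lambda,
\]
so the task is to control the super-level sets $\{v-u>\lambda\}$ of $v-u$ in terms of their $m$-capacity, and then to control that capacity by the Lebesgue measure $\int_\Om|v-u|\,dV_{2n}$. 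First I would invoke Lemma~\ref{lem:hess-cap} (valid because $v-u$ vanishes off $\Om_\veps$, so only the subsolution $\wtd\vphi_{c_0\veps}$ on $D_{c_0\veps}$ is needed, exactly as in \eqref{eq:cap-decay-b}) to bound $\mu$ of a sublevel set by $C\veps^{-2m}[cap_m(\cdot)]^{1+\al_0}$. Then the standard Chebyshev-type passage from $m$-capacity of $\{v-u>\lambda\}$ to the Lebesgue integral $\int_\Om|v-u|\,dV_{2n}$ — which uses the comparison of $cap_m$ with the Lebesgue measure on $m$-subharmonic functions, a consequence of the quasicontinuity and the Chern-Levine-Nirenberg type inequalities underlying \cite{DK14} — gives a bound of the form $cap_m(\{v-u>\lambda\}) \le C\lambda^{-1}\int_\Om|v-u|\,dV_{2n}$ up to harmless factors.

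Next I would combine these: writing $I := \int_\Om|v-u|\,dV_{2n}$, one gets
\[\notag
	\int_\Om\max\{v-u,0\}\,d\mu \;\le\; \frac{C}{\veps^{2m}}\int_0^{\sup_\Om(v-u)} \Big(\frac{I}{\lambda}\Big)^{1+\al_0}\!\! d\lambda,
\]
but the integrand is not integrable near $\lambda=0$, so one splits the integral at a threshold $\lambda_0$, estimating $\mu(\{v-u>\lambda\})\le\mu(\Om\cap\Om_\veps)\le C\veps^{-m}$ (from the mass bound in Lemma~\ref{lem:mass-bound}, since $U(s)\subset\subset\Om_\veps\subset D_{c_0\veps}$) on $(0,\lambda_0)$ and the capacity bound on $(\lambda_0,\sup(v-u))$. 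Optimizing over $\lambda_0$ produces $\int_\Om\max\{v-u,0\}\,d\mu \le C\veps^{-(m+1)} I^{\beta}$ for an explicit exponent $\beta\in(0,1)$ depending only on $\al_0$ (hence on $\al,m,n$); the symmetric estimate for $\max\{u-v,0\}\,d\mu$ is obtained by swapping the roles, using that away from $\Om_\veps$ the difference vanishes so Proposition~\ref{prop:stability} and the capacity decay apply in both directions. Setting $\al_3=\beta$ and adding the two halves yields the claim.

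The main obstacle I anticipate is keeping the powers of $\veps$ under control: each application of Lemma~\ref{lem:hess-cap} and Lemma~\ref{lem:mass-bound} injects negative powers of $\veps$, and the split-and-optimize step mixes them, so one must check carefully that the exponent collapses exactly to $\veps^{-(m+1)}$ rather than something worse. A secondary technical point is justifying the capacity-versus-Lebesgue comparison uniformly — it should follow from the $m$-subharmonic analogue of the Bedford-Taylor quasicontinuity and the volume estimates in \cite{DK14}, but one has to make sure the constant there depends only on $\Om$ and not on $v$; restricting attention, as usual, to the sublevel sets $U(s)\subset\subset\Om_\veps$ and to $w$ with $0\le w\le 1$ handles this. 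Finally, one should note that $\al_2$ from Proposition~\ref{prop:stability} enters only implicitly (via the decay-of-capacity Lemma~\ref{lem:capacity-growth}), so the exponent $\al_3$ here is an independent quantity that I would track explicitly in terms of $\al_0$.
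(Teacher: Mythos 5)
The central step of your plan is the claimed ``Chebyshev-type'' inequality $cap_m(\{v-u>\lambda\}) \le C\lambda^{-1}\int_\Om|v-u|\,dV_{2n}$, and this is a genuine gap: it is not a standard fact and does not follow from quasicontinuity plus Chern--Levine--Nirenberg estimates. Unwinding the definition of $cap_m$, you would need $\int_\Om (v-u)^{+}\,H_m(w)\le C\int_\Om|v-u|\,dV_{2n}$ uniformly over all competitors $w\in SH_m(\Om)$ with $-1\le w\le 0$; but the measures $H_m(w)$ need not be absolutely continuous with respect to Lebesgue measure (they can charge sets of Lebesgue measure zero), and $(v-u)^{+}$ is only a bounded quasicontinuous function, not $m$-subharmonic, so the CLN-type bound $\int(-h)H_m(w)\le C\|h\|_{L^1(dV)}$, which is valid for a single negative $m$-sh function $h$, does not apply to the difference $v-u$. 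In fact, bridging $L^1(\mu)$ (or $L^1(H_m(\wtd\vphi_\veps))$) and $L^1(dV_{2n})$ for such differences is precisely the hard point of the whole subsolution problem; if your inequality held, the H\"older continuity of $\vphi$ would play almost no role beyond Theorem~\ref{thm:BZ}, which is too strong. The remaining ingredients you use (the bound $\mu(\{v-u>\lambda\})\le C\veps^{-2m}[cap_m(\cdot)]^{1+\al_0}$ on $\Om_\veps\subset D_{c_0\veps}$, the total-mass bound $\mu(\Om_\veps)\le C\veps^{-m}$, the split-and-optimize in $\lambda$) are fine, but they cannot close the argument without the missing comparison to the Lebesgue measure.

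The paper's proof takes a different route, which is where the H\"older continuity actually enters: one proves by induction on $k=0,\dots,m$ that $\int_\Om (v-u)\,(dd^c\wtd\vphi_\veps)^k\wed\be^{n-k}\le C\veps^{-(k+1)}\|v-u\|_1^{\tau_k}$, passing from $k$ to $k+1$ by replacing one factor $dd^c\wtd\vphi_\veps$ with $dd^c(\wtd\vphi_\veps*\chi_t)$. The smooth term is bounded by $C\veps^{-(k+1)}t^{-2}\|v-u\|_1^{\tau_k}$ via the induction hypothesis and the bound $\|D^2(\wtd\vphi_\veps*\chi_t)\|\le C/t^2$, while the error term is handled by integration by parts (using $v=u$ off $\Om_\veps$), the H\"older estimate $|\wtd\vphi_\veps*\chi_t-\wtd\vphi_\veps|\le Ct^\al/\veps$, and a mass bound for $\int_{\Om_\veps}dd^c(u+v)\wed(dd^c\wtd\vphi_\veps)^k\wed\be^{n-k-1}$ obtained with the defining function $\rho$; optimizing $t=\|v-u\|_1^{\tau_k/3}$ gives $\tau_{k+1}=\al\tau_k/3$. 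Note in particular that neither $cap_m$, nor Lemma~\ref{lem:hess-cap}, nor Proposition~\ref{prop:stability} appears in this proof, and the exponent $\al_3$ comes from the H\"older exponent $\al$ through the induction, not from $\al_0$ as you suggest. To repair your write-up you would need either to prove the capacity--volume inequality you assert (which I do not believe is available) or to switch to the regularization/induction scheme above.
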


\begin{proof} Put 
\[
	S_{k,\veps} := (dd^c\wtd\vphi_\veps)^k \wed \beta^{n-k}, \quad k =0,...,m.
\]
where $\wtd\vphi_\veps= \max\{\vphi - \veps, M\rho/\veps\}$. Since $\mu \leq S_{m,\veps}$ on $\Omega_\veps$, it is enough to show that there is $0<\tau \leq 1$ satisfying
\[\label{eq:thm-l1-l1-2}
	\int_\Omega (v-u) S_{m,\veps}  \leq \frac{C}{\veps^{m+1}} \|v-u\|_1^{\tau}.
\]
for $v\geq u$ on $\Omega$, where here and below we denote by
$\| \cdot \|_1$  the $L^1$-norm with respect to the Lebesgue measure.
 (In the general case we use the  identity
\[\notag
	|v-u| = (\max\{v,u\} -u) + (\max\{v,u\} -v)
\]
and apply twice the inequality \eqref{eq:thm-l1-l1-2} to get the theorem.)

Now we can use the inductive arguments in $0\leq k \leq m$. Clearly it holds for $k=0$ with $\al_2=1$. Assume the inequality holds for $k<m-1$. Let us denote $\{\chi_t\}_{t>0}$ the family of standard radial smoothing kernels. To pass from the  $k$-th step to the step number $(k+1) \leq m$ 
we need the following inequality (with the notation $S_\veps:= (dd^c \wtd\vphi_\veps)^{k} \wed \beta^{n-k-1}$)
\[\label{eq:ind-est}\begin{aligned}
	\int_\Omega (v-u) dd^c \wtd\vphi_\veps \wed S_\veps 
&\leq 	\left| \int_\Omega (v-u) dd^c \wtd\vphi_\veps * \chi_t \wed S_\veps \right| \\
&\quad	+\left| \int_\Omega (v-u) dd^c (\wtd\vphi_\veps * \chi_t -\vphi_\veps) \wed S_\veps\right| \\
&=: I_{1,\veps}+ I_{2,\veps}.
\end{aligned}\]
Notice that  $\wtd\vphi_\veps$ 
is well-defined on the neighborhood $\wtd\Om$ of $\bar\Omega$, so is $\wtd\vphi_\veps *\chi_t$ for $t>0$ small. Moreover, 
$	
\|\wtd\vphi_\veps\|_{C^{0,\alpha}(\wtd\Om)} \leq C/\veps.
$
Since for $t>0$,
\[\label{eq:observe-holder-b}
	\left|\frac{\d^2 \wtd\vphi_\veps * \chi_t}{\d z_j \d \bar z_k} (z)\right| \leq \frac{C \|\wtd\vphi_\veps\|_{L^\infty(\Om)}}{ t^2} \leq \frac{C (1 + \|\vphi\|_{L^\infty(\Om)})}{t^2},
\]
and by  the induction hypothesis at  $k$-th step, there exists $0<\tau_k \leq 1$ such that
$$
\int_\Omega (v-u) S_\veps \wed \beta \leq \frac{C}{\veps^{k+1}} \|v-u\|_1^{\tau_k},
$$
we conclude that
\[\label{eq:ind-est-a}
\begin{aligned} I_{1,\veps}
&\leq 	\frac{C(1 + \|\vphi\|_{L^\infty(\Om)})}{ t^2} \int_\Omega (v-u) S_\veps \wed \beta \\
&\leq 	\frac{C(1 + \|\vphi\|_{L^\infty(\Om)})}{\veps^{k+1}\, t^2} \|v-u\|_1^{\tau_k}.
\end{aligned}\]
By integration by parts, using $u=v$ on $\Omega\setminus \Omega_\veps$, and 
\[\notag \label{eq:observe-holder-a}
\begin{aligned}
	\left|\wtd\vphi_\veps * \chi_t (z) - \wtd\vphi_\veps(z) \right|
&	\leq \frac{Ct^{\alpha}}{\veps},
\end{aligned}\]
it follows that 
\[\label{eq:i2e-a}\begin{aligned}
	I_{2,\veps} 
&\leq		\frac{C t^\alpha}{\veps} \int_{\Omega_\veps} (dd^c v+ dd^cu) \wed S_\veps. \\
\end{aligned}\]
At this point  the total mass of the right hand side may go to infinity as  $\veps \to 0^+$. However, we can control it uniformly in terms of a negative exponent of $\veps$. The argument is inspired by  \cite[Lemma~4.3]{Ng20}.  Namely,  we obtain
\[\label{eq:i2e-b}
\int_{\Omega_\veps} (dd^c u + dd^c v) \wed S_\veps  \leq  \frac{C\|u+v\|_{L^\infty(\Om)} (1+\|\vphi\|_{L^\infty(\Om)})^k}{\veps^{k+1}} .
\]
Indeed, we first have 
\[\notag\begin{aligned}
&\int_{\Omega_\veps} dd^c (u+v) \wed (dd^c \wtd\vphi_\veps)^k\wed \beta^{n- k-1}  \\
&\leq		\frac{2}{\veps'}\int_{\Omega} \left(\max\{\rho, - \veps'/2\} - \rho\right) \wed dd^c (u+v) \wed (dd^c \wtd\vphi_\veps)^k\wed \beta^{n-k-1} \\
&\leq \frac{C}{\veps} \|u+v\|_{L^\infty(\Om)}  \int_\Omega (dd^c\rho)\wed (dd^c \wtd\vphi_\veps)^k\wed \beta^{n-k-1},
\end{aligned}\]
where $\veps' = c_0\veps$ with $c_0$ defined by \eqref{eq:constant-c0}. 
The desired inequality \eqref{eq:i2e-b} follows from Lemma~\ref{lem:mass-bound}. 
Now, combining \eqref{eq:i2e-a} and \eqref{eq:i2e-b}  we get that
\[\label{eq:ind-est-b}
I_{2,\veps}  \leq		\frac{C t^\alpha}{\veps^{k+2}}.
\]
Next, it is easy to see (from Lemma~\ref{lem:mass-bound}) that 
\[\notag
	\int_{\Omega} (v-u) S_{m,\veps} \leq \frac{C \|u\|_{L^\infty(\Om)} (1+ \|\vphi\|_{L^\infty(\Om)})^m}{\veps^{m}}. 
\]
Therefore, we can assume that $0< \|v-u\|_1 < 0.01$.
Thanks to \eqref{eq:ind-est-a} and \eqref{eq:ind-est-b} we have
\[\notag
	\int_\Omega (v-u) dd^c \wtd\vphi_\veps\wed S_\veps \leq \frac{C}{\veps^{k+1} \, t^2} \|v-u\|_1^{\tau_k} + \frac{C t^\alpha}{\veps^{k+2}}.
\]
If we choose 
$
	t = \|v-u\|_1^\frac{\tau_k}{3}, 
	\quad \tau_{k+1} = \frac{\alpha\tau_k}{3},
$
then
\[\notag
	\int_\Omega (v-u) S_\veps \wed dd^c \wtd\vphi_\veps \leq \frac{C}{\veps^{k+2}} \|v-u\|_1^{\tau_{k+1}}.
\]
Thus, the induction argument is completed, and the theorem follows. 
\end{proof}

The next result is from \cite[Lemma~4.10]{Ng20} and its proof used only the subharmonicity. Therefore, it is also valid for $m$-subharmonic functions.
\begin{lem}  \label{lem:lap-est} Let $u$ be the solution in \eqref{eq:cont-sol} and define 
 for $z\in \Omega_\delta$, $$\hat u_\delta(z):= \frac{1}{\sigma_{2n}\delta^{2n}} \int_{|\zeta| \leq \delta} u(z+\zeta) dV_{2n}(\zeta),$$
where $\sigma_{2n}$ is the volume of the unit ball.  Then,  for $\delta>0$ small,
\[
	\int_{\Omega_\delta} |\hat u_\delta -u | dV_{2n} \leq C \delta.
\]
\end{lem}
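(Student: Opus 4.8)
The plan is to estimate the averaging error $\hat u_\delta - u$ using only the subharmonicity of $u$ and a Fubini-type argument, exactly as in \cite[Lemma~4.10]{Ng20}. Since $u$ is $m$-subharmonic it is in particular subharmonic (plurisubharmonic implies subharmonic, and $SH_m\subset SH_1=PSH$ in the ordering used here, but what we truly need is only that $\De u\ge 0$ in the distributional sense), so the submean value inequality gives $u(z)\le \hat u_\delta(z)$ pointwise on $\Om_\delta$. Hence $|\hat u_\delta-u|=\hat u_\delta-u\ge 0$, and it suffices to bound $\int_{\Om_\delta}(\hat u_\delta-u)\,dV_{2n}$ from above by $C\delta$.

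The key computation is to write the difference as a double integral and integrate by parts against the Laplacian. Concretely, for each fixed $r\in[0,\delta]$ one has the classical identity expressing the spherical (or ball) average minus the center value as an integral of $\De u$ against a nonnegative kernel $G_r(z,\zeta)$ supported in $|\zeta|\le r$ with $\int G_r(z,\cdot)\,dV_{2n}\le C r^2$ (the Green function of the ball of radius $r$). Averaging over $r\in[0,\delta]$ with the appropriate weight coming from the normalization $\sigma_{2n}\delta^{2n}$ and then integrating in $z$ over $\Om_\delta$, Fubini's theorem turns the estimate into
\[\notag
	\int_{\Om_\delta}(\hat u_\delta-u)\,dV_{2n}\le C\delta^2\int_{\Om}\De u,
\]
where $\De u$ is the (finite, since $u$ is bounded $m$-subharmonic) Riesz mass of $u$ on a slightly larger domain. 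Because $u\in L^\infty(\bar\Om)$, the total Riesz mass $\int_{\Om'}\De u$ over a fixed intermediate domain $\Om\subset\subset\Om'\subset\subset\wtd\Om$ is bounded by a constant depending only on $\|u\|_{L^\infty}$ and $\Om$; one extends $u$ to $\Om'$ by gluing with a large multiple of $\rho$ if needed, or simply works on $\Om_{\delta}$ and uses that $\int_{\Om_{\delta}}\De u\le C$ uniformly in small $\delta$. Since $u$ is in fact globally bounded on $\bar\Om$ we get the cleaner bound $C\delta^2$, which is $\le C\delta$ for $\delta$ small; if one prefers to keep track only of the linear rate, note $\delta^2\le \delta$ for $\delta\le 1$.

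I would organize the write-up as: (i) record $u\le\hat u_\delta$ by the submean value property and hence drop the absolute value; (ii) state the radial Green-kernel identity for the ball average and the bound $\int G_r\le Cr^2$; (iii) apply Fubini to swap the $z$-integration with the $\De u$-integration, producing $C\delta^2\int_{\Om'}\De u$; (iv) bound the Riesz mass by $C(\|u\|_{L^\infty},\Om)$ and conclude. The only mild technical point — and the step I expect to need the most care — is (iv), namely controlling the total Riesz mass near $\d\Om_\delta$ uniformly in $\delta$; this is handled precisely by the boundedness of $u$ on $\bar\Om$ together with a standard cutoff/comparison with $C\rho$, so there is no real obstacle, and the whole argument is identical to the subharmonic case treated in \cite{Ng20}.
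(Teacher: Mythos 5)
Your overall strategy (drop the absolute value by the submean value property, use the Jensen/Green-kernel identity, then Fubini) is the right one, but there is a genuine gap in step (iv): the claim that the Riesz mass is uniformly controlled, i.e.\ that $\int_{\Om_\delta}\De u\le C$ independently of $\delta$, or that $\int_{\Om}\De u$ is finite ``since $u$ is bounded''. This is false in general: a bounded (even H\"older continuous) $m$-subharmonic function can have infinite total Laplacian mass near $\d\Om$ --- the paper's own Example with $-(-\rho)^\al$, $0<\al<1$, is of this type, since $\De\bigl(-(-\rho)^\al\bigr)\sim d(z,\d\Om)^{\al-2}$ is not integrable up to the boundary. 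Nothing in the hypotheses prevents the solution $u$ from behaving this way (indeed the whole point of the paper is that $\mu$ may have infinite mass), so you cannot quote a $\delta$-independent bound. The proposed fix by extension also fails: since $u$ attains its boundary values $\phi$, near a boundary point where $\phi$ is maximal one has $u-B\gtrsim -Cd(z)^{\al}\gg A\rho\sim -Ad(z)$, so $\max\{u,A\rho+B\}$ equals $u$, not the barrier, near $\d\Om$, and the gluing does not produce an $m$-subharmonic function on a neighborhood of $\bar\Om$. (This is precisely why the paper works with the $\veps$-dependent extensions $\max\{\vphi-\veps,M\rho/\veps\}$ whose norms blow up as $\veps\to0$.) A second, related problem: if you apply Fubini over all of $\Om_\delta$, the balls $B(z,\delta)$ with $z\in\Om_\delta$ sweep out essentially all of $\Om$, so your double integral involves the possibly infinite mass $\int_\Om\De u$, not $\int_{\Om_\delta}\De u$.

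The correct quantitative input, and the way the paper argues, is a Chern--Levine--Nirenberg bound with the right rate: integrating by parts against $\max\{\rho,-c_0\delta/2\}-\rho$ gives
\[
\int_{\Om_\delta}dd^cu\wedge\be^{n-1}\ \le\ \frac{C\|u\|_{L^\infty(\Om)}}{\delta}\int_\Om dd^c\rho\wedge\be^{n-1}\ \le\ \frac{C}{\delta},
\]
i.e.\ the mass may blow up, but only like $\delta^{-1}$. One then splits $\Om_\delta=\Om_{2\delta}\cup(\Om_\delta\setminus\Om_{2\delta})$: on the thin collar, $|\hat u_\delta-u|\le2\|u\|_{L^\infty}$ and the collar has volume $O(\delta)$; on $\Om_{2\delta}$ the balls $B(z,\delta)$ stay inside $\Om_\delta$, so Jensen's formula gives $\int_{\Om_{2\delta}}|\hat u_\delta-u|\,dV_{2n}\le C\delta^2\int_{\Om_\delta}\De u\le C\delta$. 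So your skeleton survives, but you must replace the (unjustified) uniform mass bound by the $C/\delta$ bound, and restrict the Fubini/Jensen step to $\Om_{2\delta}$, handling the collar trivially; the final rate is then $C\delta$, not $C\delta^2$.
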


\begin{proof} Since $\Om_\de \subset D_{c_0\de}$, we have
\[\begin{aligned}
	\int_{\Omega_\delta} dd^c u(z) \wed \be^{n-1} &\leq \frac{2}{c_0\delta} (\max\{\rho, -c_\de/2\} - \rho) dd^c u \wed \be^{n-1} \\&\leq \frac{C \|u\|_{L^\infty(\Om)}}{\de} \int_\Om dd^c\rho\wed \be^{n-1}.
\end{aligned}\]
Therefore, using the classical Jensen formula for subharmonic functions (see e.g., \cite[Lemma~4.3]{GKZ08}),
\[\begin{aligned}
	\int_{\Omega_\delta} |\hat u_\delta -u| dV_{2n} &\leq \int_{\Omega_{2\delta}} |\hat u_\delta -u| dV_{2n} + \|u\|_{L^\infty(\Om)} \int_{\Omega_\delta\setminus \Omega_{2\delta}} dV_{2n}  \\&\leq C \de^2 \int_{\Om_\de} \De u + c_6 \de \\&\leq C\de. 
\end{aligned}\]
This is the required inequality.
\end{proof}

We are ready to prove the H\"older continuity of the solution $u$ in \eqref{eq:cont-sol}.

\begin{proof}[End of Proof of  Theorem~\ref{thm:main}] Let us fix $\delta$ such that $0< \delta < \delta_0$ small and let $\veps$ be such that $ \delta \leq \veps < \delta_{0}$ which is to be determined later. Recall that
\[
	\Omega_\delta:= \{z\in \Omega: dist(z,\d\Omega) >\delta\};
\]
and for $z\in \Omega_\delta$ we define
\[\begin{aligned}
&	u_\delta(z) := \sup_{|\zeta| \leq \delta} u(z+\zeta).
\end{aligned}\]

Directly from definitions and the H\"older continuity of the boundary data as in \cite[Lemma~4.1]{Ng20} we have the following inequalities.

\begin{lem} \label{lem:boundary-holder}
We have for $z\in \bar\Omega_{\delta}\setminus\Omega_{\veps}$,
\[	u_\delta(z) \leq u(z) + c_5 \veps^\alpha,
\]
where $c_5= \|\vphi\|_{C^{0,\al}(\bar\Om)} + \|\phi\|_{C^{0,\al}(\bar\Om)}$.
In particular, 
\[
	\sup_{\Omega_\delta} (\hat u_\delta - u) \leq \sup_{\Omega_\veps} (\hat u_\delta - u) + c_5 \veps^\alpha.
\]
\end{lem}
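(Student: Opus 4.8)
The plan is to trap the solution $u$ between two $\al$-H\"older continuous barriers on $\bar\Om$ which agree with $\phi$ on $\d\Om$, following \cite[Lemma~4.1]{Ng20}. For the upper barrier take $h\in SH_m(\Om)\cap C^0(\bar\Om)$, the maximal $m$-subharmonic function with boundary values $\phi$, i.e. the solution of $H_m(h)=0$, $h=\phi$ on $\d\Om$; its existence and continuity are classical for the strictly $m$-pseudoconvex $\Om$, and since $\phi\in C^{0,\al}(\d\Om)$ the standard barrier construction from the strictly $m$-subharmonic defining function $\rho$ shows $h\in C^{0,\al}(\bar\Om)$ with H\"older norm controlled by $\|\phi\|_{C^{0,\al}}$ (done exactly as in the pluripotential case $m=n$). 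By the comparison principle \cite[Theorem~1.4]{DK14}, the relations $H_m(h)=0\le\mu=H_m(u)$ and $h=u=\phi$ on $\d\Om$ give $u\le h$ on $\bar\Om$. For the lower barrier, $\vphi+h\in SH_m(\Om)\cap C^0(\bar\Om)$; expanding $(dd^c(\vphi+h))^m$ and using positivity of mixed complex Hessian measures of bounded $m$-subharmonic functions we get $H_m(\vphi+h)\ge(dd^c\vphi)^m\wed\be^{n-m}=H_m(\vphi)\ge\mu$, while $\vphi+h=\phi$ on $\d\Om$, so the comparison principle again yields $\vphi+h\le u$ on $\bar\Om$. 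Finally $\vphi\le0$ on $\Om$ by the maximum principle, and since $\vphi$ vanishes on $\d\Om$ and is $\al$-H\"older, $0\le-\vphi(z)\le\|\vphi\|_{C^{0,\al}(\bar\Om)}\,{\rm dist}(z,\d\Om)^\al$.

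With the sandwich $\vphi+h\le u\le h$ in hand the estimate is elementary. Fix $z\in\bar\Om_\de\setminus\Om_\veps$; as $\de\le\veps$ we have ${\rm dist}(z,\d\Om)\le\veps$ and $z+\zeta\in\bar\Om$ for all $|\zeta|\le\de$, hence $u(z+\zeta)-u(z)\le h(z+\zeta)-\bigl(\vphi(z)+h(z)\bigr)\le|h(z+\zeta)-h(z)|+|\vphi(z)|\le\|\phi\|_{C^{0,\al}(\bar\Om)}\,\de^\al+\|\vphi\|_{C^{0,\al}(\bar\Om)}\,\veps^\al$, bounding the first summand by the H\"older norm of $h$ and the second by the inequality for $|\vphi(z)|$ above. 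Since $\de\le\veps$ the right side is at most $c_5\veps^\al$ with $c_5=\|\vphi\|_{C^{0,\al}(\bar\Om)}+\|\phi\|_{C^{0,\al}(\bar\Om)}$; taking the supremum over $|\zeta|\le\de$ gives $u_\de(z)\le u(z)+c_5\veps^\al$.

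For the ``in particular'' part I would argue as follows. One has $\hat u_\de\le u_\de$ pointwise on $\Om_\de$ (an average is dominated by a supremum) and $\hat u_\de\ge u$ there by the sub-mean value inequality for the subharmonic function $u$. Thus on $\bar\Om_\de\setminus\Om_\veps$, $0\le\hat u_\de-u\le u_\de-u\le c_5\veps^\al$, whereas on $\Om_\veps$ trivially $\hat u_\de-u\le\sup_{\Om_\veps}(\hat u_\de-u)$, a nonnegative quantity. Because $\Om_\de$ is the union of $\Om_\veps$ with a subset of $\bar\Om_\de\setminus\Om_\veps$, taking the supremum over $z\in\Om_\de$ yields $\sup_{\Om_\de}(\hat u_\de-u)\le\sup_{\Om_\veps}(\hat u_\de-u)+c_5\veps^\al$.

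The only ingredient that is not pure bookkeeping is the H\"older bound up to the boundary for the maximal function $h$, with constant controlled by $\|\phi\|_{C^{0,\al}}$; this is the step I expect to be the main obstacle for a self-contained account. It is, however, by now routine for strictly $m$-pseudoconvex domains — one builds local upper and lower barriers at each boundary point out of $\rho$ exactly as in the Monge-Amp\`ere ($m=n$) theory — and even if that construction were to cost a slightly smaller H\"older exponent it would do no harm, since we have already reserved the freedom to decrease $\al$. Accordingly I would either invoke the Monge-Amp\`ere reference and observe that its proof transfers verbatim, or reproduce the short barrier computation.
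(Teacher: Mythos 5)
Your proof is correct and follows essentially the paper's own route: the paper gives no argument beyond citing \cite[Lemma~4.1]{Ng20}, which is exactly the barrier/sandwich comparison you reconstruct, namely $\vphi+h\le u\le h$ (comparison principle \cite[Theorem~1.4]{DK14} plus superadditivity of $H_m$ for sums of bounded $m$-subharmonic functions) combined with $-\vphi(z)\le \|\vphi\|_{C^{0,\al}(\bar\Om)}\,{\rm dist}(z,\d\Om)^\al$ and the elementary bookkeeping for $\hat u_\de\le u_\de$. The one caveat is the step you yourself flag: for $\phi\in C^{0,\al}(\d\Om)$ the maximal solution $h$ of $H_m(h)=0$ is in general only $C^{0,\al/2}(\bar\Om)$ even when $m=n$ (Bedford--Taylor \cite{BT76}; see \cite{Cha14} for Hessian equations), and its H\"older norm carries a factor depending on $\Om$ and $\rho$, so as written your argument yields the estimate with $\veps^{\al/2}$ and constant $C(\Om,\rho)\,c_5$ rather than literally $c_5\veps^\al$ — which, after shrinking $\al$ as both you and the paper explicitly allow, is all that is needed for Theorem~\ref{thm:main}.
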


\begin{remark} It is important to keep in mind that the uniform constants $C, c_i>0$ which appear in the lemma, and several times below are independent of  $\delta$ and  $\veps$. 
\end{remark}

Thanks to  Lemma~\ref{lem:boundary-holder} and $\hat u_\delta \leq u_\delta$ we have $\hat u_\delta - c_5\veps^\alpha \leq u$ on $\d\Omega_\veps$. Therefore, the function
\[ \label{eq:extend-solution}
\tilde u := 
\begin{cases} 
	\max\{\hat u_\delta - c_5 \veps^\alpha, u\} \quad 
	&\mbox{ on } \Omega_{\veps},\\
	u  \quad &\mbox{ on } \Omega\setminus \Omega_{\veps},
\end{cases}
\]
belongs to $SH_m(\Omega)\cap C^0(\bar\Omega)$. Notice that $\tilde u \geq u$ in $\Omega$, and
\[
	 \tilde u = u \quad\mbox{ on } \Omega\setminus \Omega_\veps.
\]
Again, by the second part of Lemma~\ref{lem:boundary-holder}  we have that
\[\label{eq:holder-eq1}
\begin{aligned}
	\sup_{\Omega_\delta}(\hat u_\delta -u) 
&\leq 	\sup_{\Omega_\veps} (\hat u_\delta -u) + c_5 \veps^\alpha \\
&\leq		\sup_{\Omega} (\tilde u - u) + 2c_5 \veps^\alpha.
\end{aligned}\]
By the stability estimate (Proposition~\ref{prop:stability}) there exists $0<\alpha_2 \leq 1$ such that
\[\label{eq:holder-eq2}\begin{aligned}
	\sup_{\Omega} (\tilde u - u) 
&\leq 	\frac{C}{\veps^{2m}} \left(\int_{\Omega} \max\{\tilde u - u,0\} d\mu\right)^{\alpha_2} \\
&\leq		\frac{C}{\veps^{2m}} \left(\int_{\Omega} |\tilde u - u|  d\mu\right)^{\alpha_2},	
\end{aligned}\]
where we used the fact that $\tilde u = u$ outside $\Omega_\veps$.
Using Theorem~\ref{thm:l1-l1}, there is $0<\alpha_3\leq 1$ such that
\[\label{eq:holder-eq3}
\begin{aligned}
	\sup_{\Omega} (\tilde u -u) 
&\leq 	\frac{C}{\veps^{2m+(m+1)\alpha_2}}  \left(\int_{\Omega} |\tilde u - u|  dV_{2n}\right)^{\alpha_2\alpha_3} \\
&\leq		\frac{C}{\veps^{3m+1}}  \left(\int_{\Omega_\delta} |\hat u_\delta - u|  dV_{2n}\right)^{\alpha_2\alpha_3}, 
\end{aligned}\]
where we used $0\leq \tilde u - u \leq {\bf 1}_{\Omega_\veps} \cdot (\hat u_\delta -u)$ and $\Omega_\veps \subset \Omega_{\delta}$ for the second inequality. It follows from \eqref{eq:holder-eq1}, \eqref{eq:holder-eq3} and Lemma~\ref{lem:lap-est} that
\[
	\sup_{\Omega_\delta} (\hat u_\delta - u) \leq C \veps^\alpha + \frac{C \delta^{\alpha_2\alpha_3}}{\veps^{3m+1}}.
\]
Now, we choose $\alpha_4 = \alpha\alpha_2\alpha_3/(3m+1+\alpha)$ and 
$$ \veps = \delta^{\frac{\alpha_2\alpha_3 }{3m+1 + \alpha}}.
$$
Then, 
$
	\sup_{\Omega_\delta} (\hat u_\delta - u) \leq  C \delta^{\alpha_4}.
$
Finally, thanks to \cite[Lemma~4.2]{GKZ08}  (see also \cite{Z21}) we infer that 
$
\sup_{\Omega_\delta} (u_\delta - u) \leq C \delta^{\alpha_4}.
$
The proof of the theorem is finished.
\end{proof}

\end{document}